\numberwithin{equation}{section}
\newcommand{\mynewtheorem}[2]{
  \newaliascnt{#1}{dummy}
  \newtheorem{#1}[#1]{#2}
  \aliascntresetthe{#1}
  \expandafter\def\csname #1autorefname\endcsname{#2}
}
\theoremstyle{plain}
\theoremstyle{definition}
\theoremstyle{remark}
\newenvironment{thmbis}
  {\addtocounter{theorem}{-1}%
   \begin{theorem}}
  {\end{theorem}}
\newcommand{\norm}[1]{\left\|{#1}\right\|}
\newcommand{\ip}[2]{\left<#1,#2\right>}  
\newcommand{\tos}{\xrightarrow{s}}  
\newcommand{\tosr}{\xrightarrow{s.r.}}
\newcommand{\tonr}{\xrightarrow{n.r.}}
\newcommand{\dom}[1]{\spac{D}({#1})} 
\newcommand{\bounded}[1]{\spac{B}\left({#1}\right)}  
\newcommand{\+}[1]{\ensuremath{\boldsymbol{#1}}} 
\DeclareMathOperator{\supp}{supp}
\newcommand{\oper}[1]{\mathcal{#1}} 
\newcommand{\spac}[1]{\mathfrak{#1}}
\newcommand{\sform}[1]{\mathfrak{\lowercase{#1}}}
\newcommandtwoopt{\Me}[2][\lambda][\varepsilon]{\ensuremath{\+{\oper{M}}^{#1}_{#2}}} 
\title{Approximations of strongly continuous families of unbounded self-adjoint operators}
\author{Jonathan Ben-Artzi \thanks{Email: \texttt{J.Ben-Artzi@imperial.ac.uk}}} 
\affil{Department of  Mathematics\\Imperial College London}
\author{Thomas Holding \thanks{Email: \texttt{T.J.Holding@maths.cam.ac.uk}}}
\affil{Cambridge Centre for Analysis\\University of Cambridge}
\begin{document}
\renewcommand{\thefootnote}{\fnsymbol{footnote}} 
\footnotetext{\emph{2010 Subject class:} Primary 35P20}     
\renewcommand{\thefootnote}{\arabic{footnote}} \maketitle

\begin{abstract}
The problem of approximating the discrete spectra of families of self-adjoint operators that are merely strongly continuous is addressed. It is well-known that the spectrum need not vary continuously (as a set) under strong perturbations. However, it is shown that under an additional compactness assumption the spectrum does vary continuously, and a family of symmetric finite-dimensional approximations is constructed. An important feature of these approximations is that they are valid for the entire family uniformly. An application of this result to the study of plasma instabilities is illustrated.
\end{abstract}

\tableofcontents

\section{Introduction}
\subsection{Overview}
We present a method for obtaining finite-dimensional approximations of the discrete spectrum of families of self-adjoint operators. We are interested in operators that decompose into a system of two coupled Schr\"odinger operators with opposite signs (see \eqref{eq:non-positive} below). However our results are applicable to ``standard'' Schr\"odinger operators, and in fact we prove our main result, \autoref{thm:main}, for Schr\"odinger operators first, see \autoref{thm:main2}. We are interested in the following problem:
\begin{problem}\label{main problem}
Consider the family of self-adjoint unbounded operators
	\begin{equation}\label{eq:non-positive}
	\oper{M}^\lambda=\oper{A}+\oper{K}^\lambda=
	\begin{bmatrix}
	-\Delta+1&0\\
	0&\Delta-1
	\end{bmatrix}
	+
	\begin{bmatrix}
	\oper{K}^\lambda_{++}&\oper{K}^\lambda_{+-}\\
	\oper{K}^\lambda_{-+}&\oper{K}^\lambda_{--}
	\end{bmatrix},\quad \lambda\in [0,1]
	\end{equation}
acting in an appropriate subspace of $L^2(\mathbb{R}^d)\oplus L^2(\mathbb{R}^d)$, where  $\{\mathcal{K}^\lambda\}_{\lambda\in [0,1]}$ is a bounded, symmetric and strongly continuous family.  \emph{Is it possible to construct explicit finite-dimensional self-adjoint approximations of $\mathcal{M}^\lambda$ whose spectrum in compact subsets of $(-1,1)$ converges to that of $\mathcal{M}^\lambda$ uniformly in $\lambda$?}
\end{problem}

This problem is motivated by Maxwell's equations, which in the Lorenz gauge may be written as the following elliptic system for the electromagnetic potentials $\phi$ and $\mathbf{A}$ (after taking a Laplace transform in time):
	\begin{equation}\label{eq:maxwell-intro}
	\left\{\begin{aligned}
	(-\mathbf{\Delta}+\lambda^2)\mathbf{A}+\mathbf{j}=\mathbf{0}\\
	(\Delta-\lambda^2)\phi+\rho=0
	\end{aligned}\right.
	\end{equation}
where $\rho$ and $\mathbf{j}$ are the charge and current densities, respectively. The specific problem we have in mind, treated separately in  \cite{Ben-Artzi2013a}, is that of instabilities of the relativistic Vlasov-Maxwell system describing the evolution of collisionless plasmas {and} it is outlined in \autoref{sec:application} below. The Vlasov equation provides the coupling of the two equations   in \eqref{eq:maxwell-intro}, making the system self-adjoint (see, for instance, the expressions \eqref{eq:gauss} and \eqref{eq:maxwell2}).

\subsection{The main result}\label{sec:main-result}
{Let us first summarise the notation we use throughout this article. For operators we use upper case calligraphic letters, such as $\oper{T}$. The spectrum of $\oper{T}$ is denoted $\mathrm{sp}(\oper{T})$. For the sesquilinear form associated to an operator we use the same letter in lower case Fraktur font. Hence the operator $\oper{T}$ has the associated form $\sform{t}$. The space of bounded linear operators on a Hilbert space $\mathfrak{H}$ is denoted $\mathfrak{B}(\mathfrak{H})$. Domains of operators or forms are denoted by $\mathfrak{D}$. The graph norms of an operator $\oper{T}$ and a form $\sform{T}$ are denoted $\norm{\cdot}_{\oper{T}}$ and $\norm{\cdot}_{\sform{T}}$, respectively. Strong, strong resolvent and norm resolvent convergence are denoted by $\tos$, $\tosr$ and $\tonr$, respectively.  For brevity, we denote $\overline{\mathbb{N}}=\mathbb{N}\cup\{\infty\}$. We also recall the definition of a sectorial form:

\begin{definition}
A form $\sform{t}$ is said to be \emph{sectorial} if its numerical range $\Theta(\sform{t})$ (that is, the set $\{\sform{t}[u,u] : \|u\|=1,\, u\in\dom{\sform{t}}\}\subseteq\mathbb{C}$) is a subset of a sector of the form
	\begin{equation*}
	\left\{\zeta : |\arg(\zeta-\gamma)|\leq\theta\right\},\qquad\theta\in[0,\pi/2),\quad\gamma\in\mathbb{R}.
	\end{equation*}

\end{definition}

}

Let $\mathfrak{H}=\mathfrak{H}_+\oplus\mathfrak{H}_-$ be a (separable) Hilbert space with inner product $\ip{\cdot}{\cdot}$ and norm $\norm{\cdot}$ and let 
	\begin{equation*}
	\oper{A}^\lambda=
	\begin{bmatrix}
	\mathcal{A}_+^\lambda&0\\
	0&-\mathcal{A}_-^\lambda
	\end{bmatrix}
	\quad\text{and}\quad
	\oper{K}^\lambda=
	\begin{bmatrix}
	\oper{K}^\lambda_{++}&\oper{K}^\lambda_{+-}\\
	\oper{K}^\lambda_{-+}&\oper{K}^\lambda_{--}
	\end{bmatrix},\quad\lambda\in[0,1]
	\end{equation*}
be two families of operators on $\mathfrak{H}$ depending upon the parameter $\lambda\in[0,1]$, where the family $\oper{A}^\lambda$ is also assumed to be defined for $\lambda$ in an open neighbourhood $D$ of $[0,1]$ in the complex plane. {The two families $\oper{A}^\lambda$ and $\oper{K}^\lambda$ satisfy}:\\

\textbf{i) Sectoriality:} The {families $\{\oper{A}_\pm^\lambda\}_{\lambda\in D}$ are \emph{holomorphic of type (B)}\footnote{We adopt the terminology of Kato \cite{Kato1995}.}. That is, they are families of sectorial operators and the associated sesquilinear  forms $\sform{a}_\pm^\lambda$ are \emph{holomorphic of type (a)}: all $\{\sform{a}_\pm^\lambda\}_{\lambda\in D}$ are sectorial and closed, with domains that are independent of $\lambda$ and dense in $\spac{H}_\pm$,\footnote{Hence we shall remove the $\lambda$ superscript when discussing the domains of $\sform{a}^\lambda$ and $\sform{a}_\pm^\lambda$.}  and $D\ni\lambda\mapsto\sform{a}^\lambda_\pm[u,v]$ are holomorphic for any $u,v\in\dom{\sform{a}^\lambda_\pm}$.} Furthermore, we assume that $\oper{A}^\lambda_\pm$ are self-adjoint for $\lambda\in[0,1]$.\\

\textbf{ii) Gap:} $\oper{A}^\lambda_\pm>1$ for every $\lambda\in[0,1]$.\\

\textbf{iii) Bounded perturbation:} $\{\oper{K}^\lambda\}_{\lambda\in[0,1]}\subset\bounded{\mathfrak{H}}$ is a {self-adjoint} strongly continuous family.\\

\textbf{iv) Compactness:} There exist {self-adjoint} operators $\oper{P}_\pm\in\bounded{\mathfrak{H}_\pm}$ which are relatively compact with {respect to $\oper{A}^\lambda_\pm$}, satisfying $\oper{K}^\lambda=\oper{K}^\lambda\oper{P}$ for all $\lambda\in[0,1]$ where \[\oper{P}=\begin{bmatrix}\oper{P}_+&0\\0&\oper{P}_-\end{bmatrix}.\]\\

Finally, if the family $\oper{A}^\lambda$ does not have a compact resolvent we assume:\\

\textbf{v) Compactification of the resolvent:}
There exist holomorphic forms $\{\sform{w}_\pm^\lambda\}_{\lambda\in D}$ of type (a) and associated operators $\{\oper{W}^\lambda_\pm\}_{\lambda\in D}$ of type (B) such that  for $\lambda\in[0,1]$, $\oper{W}^\lambda_\pm$ are self-adjoint and non-negative. {Define  	 \[\oper{W^\lambda}=\begin{bmatrix}\oper{W}_+^\lambda&0\\0&-\oper{W}_-^\lambda\end{bmatrix},\quad \lambda\in D,\] 
and
\begin{equation}\label{eq:a-epsilon}
	\oper{A}_\varepsilon^\lambda:=\oper{A}^\lambda+\varepsilon\oper{W}^\lambda,\quad \lambda\in D,\ \varepsilon\ge0
	\end{equation}
with  respective associated forms $\sform{W}^\lambda$ and $\sform{A}_\varepsilon^\lambda$}.
Then we assume that $\dom{\sform{W}^\lambda}\cap\dom{\sform{A}}$ are dense for all $\lambda\in D$ and the inclusion $(\dom{\sform{W}^\lambda}\cap\dom{\sform{A}},\norm{\cdot}_{\sform{A}_\varepsilon^\lambda})\to(\mathfrak{H},\norm{\cdot})$ is compact for some $\lambda\in D$ and all $\varepsilon>0$.\\

\paragraph{Goal.}
Define the family of (unbounded) operators $\{\oper{M}^\lambda\}_{\lambda\in[0,1]}$, acting in $\mathfrak{H}$,  as
	\begin{equation}\label{Afamilydef}
	\oper{M}^\lambda=\oper{A}^\lambda+\oper{K}^\lambda,\quad \lambda\in [0,1].
	\end{equation}
It is these operators that we wish to approximate.

\paragraph{The Projections.} Let $\oper{A}_\varepsilon^\lambda$ be as in \eqref{eq:a-epsilon}, and define
	\begin{equation}\label{eq:m-epsilon}
	\mathcal{M}^\lambda_\varepsilon=\mathcal{A}^\lambda_\varepsilon+\mathcal{K}^\lambda,\quad\lambda\in[0,1].
	\end{equation}
Let	
\begin{itemize}
\item $\{e^\lambda_{\varepsilon,k}\}_{k\in\mathbb{N}}\subset\mathfrak{H}$ be a complete orthonormal set of eigenfunctions of $\mathcal{A}^\lambda_\varepsilon$,
\item $\mathcal{G}^\lambda_{\varepsilon,n}:\mathfrak{H}\to\mathfrak{H}$ be the orthogonal projection operators onto $\mathrm{span}(e^\lambda_{\varepsilon,1},\dots,e^\lambda_{\varepsilon,n})$,
\item $\widetilde{\oper{M}}^{\lambda}_{\varepsilon,n}$ be the $n$-dimensional operator defined as the restriction of ${\oper{M}}^{\lambda}_{\varepsilon}$ to $\mathcal{G}^\lambda_{\varepsilon,n}(\mathfrak{H})$.
\end{itemize}

Fix $\varepsilon^*>0$, and define the function
\begin{equation*}
\begin{aligned}
 \Sigma:[0,1]\times[0,\varepsilon^*]&\to(\text{closed subsets of } (-1,1),d_H)\\
\Sigma(\lambda,\varepsilon)&=(-1,1)\cap\mathrm{sp}(\oper{M}^{\lambda}_{\varepsilon})\\
\end{aligned}
\end{equation*}
and for fixed $\varepsilon>0$
\begin{equation*}
\begin{aligned}
 \Sigma_{\varepsilon}:[0,1]\times\mathbb{N}&\to(\text{closed subsets of } (-1,1),d_H)\\
\Sigma_{\varepsilon}(\lambda,n)&=(-1,1)\cap\mathrm{sp}(\widetilde{\oper{M}}^{\lambda}_{\varepsilon,n})
\end{aligned}
\end{equation*}
where $\mathrm{sp}(\mathcal{O})$ is the spectrum of the operator $\mathcal{O}$  and $d_H$ is the Hausdorff distance, defined for two bounded sets $X,Y\subset\mathbb{C}$ as:
	\[
	d_H(X,Y)=\max\left(\sup_{y\in Y}\inf_{x\in X}|x-y|,\sup_{x\in X}\inf_{y\in Y}|x-y|\right).
	\]
{This defines a pseudometric, which becomes a metric if restricting to \emph{closed} bounded sets (this is indeed the case here, see \autoref{remark:topology} below)}. Our main result is formulated for the general case where the spectrum of $\oper{A}^\lambda$ may have a continuous part:

\begin{theorem}\label{thm:main}
The mappings $\Sigma(\cdot,\cdot)$ and $\Sigma_{\varepsilon}(\cdot,n)$ are continuous in their arguments, and as $n\to\infty$, $\Sigma_{\varepsilon}(\lambda,n)\to \Sigma(\lambda,\varepsilon)$ uniformly in $\lambda\in[0,1]$.
\end{theorem}
{\begin{remark}\label{remark:topology}
It is well known that the spectrum of an operator is a closed set. Moreover, in our case we know that the spectrum in $(-1,1)$ is discrete and with no accumulation points. Hence when it is stated that $\Sigma$ and $\Sigma_\varepsilon$ take values in ``closed subsets of  $(-1,1)$''  there is no ambiguity with respect to which topology is considered: the standard topology on the real line, or the topology on $(-1,1)$ thought of as a subspace of the real line. We consider the standard topology on the real line.
\end{remark}}

\paragraph{A Simpler Case: Semi-Bounded Operators.} As the notation becomes quite cumbersome due to the decomposition $\mathfrak{H}=\mathfrak{H}_+\oplus\mathfrak{H}_-$, we shall first treat the simpler case of semi-bounded operators. {Let $\oper{A}^\lambda$ and $\oper{K}^\lambda$, where $\lambda\in[0,1]$, be two families of operators on some Hilbert space $\mathfrak{H}$  (which is not assumed to decompose as before) where the family $\oper{A}^\lambda$ is also assumed to be defined for $\lambda$ in an open neighbourhood $D$ of $[0,1]$ in the complex plane. For the sake of precision, we repeat the assumptions (i)-(v) reformulated for this case.

\textbf{i) Sectoriality:} The family $\oper{A}^\lambda$ is sectorial of type (B) in $\lambda\in D$ and self-adjoint for $\lambda\in[0,1]$.

\textbf{ii) Semi-boundedness:} $\oper{A}^\lambda>1$ for every $\lambda\in[0,1]$.

\textbf{iii) Bounded perturbation:} $\{\oper{K}^\lambda\}_{\lambda\in[0,1]}\subset\bounded{\mathfrak{H}}$ is a {self-adjoint} strongly continuous family.

\textbf{iv) Compactness:} There exists a {self-adjoint} operator $\oper{P}\in\bounded{\mathfrak{H}}$ which is relatively compact with respect to $\oper{A}^\lambda$, satisfying $\oper{K}^\lambda=\oper{K}^\lambda\oper{P}$ for all $\lambda\in[0,1]$.

\textbf{v) Compactification of the resolvent:}
There exist  holomorphic forms $\{\sform{w}^\lambda\}_{\lambda\in D}$ of type (a) and associated operators $\{\oper{W}^\lambda\}_{\lambda\in D}$ of type (B) such that  for $\lambda\in[0,1]$, $\oper{W}^\lambda$ are self-adjoint and non-negative. {Define  
\begin{equation*}
	\oper{A}_\varepsilon^\lambda:=\oper{A}^\lambda+\varepsilon\oper{W}^\lambda,\quad \lambda\in D,\ \varepsilon\ge0
	\end{equation*}
with  respective associated forms $\sform{W}^\lambda$ and $\sform{A}_\varepsilon^\lambda$}.
Then we assume that $\dom{\sform{W}^\lambda}\cap\dom{\sform{A}}$ are dense for all $\lambda\in D$ and the inclusion $(\dom{\sform{W}^\lambda}\cap\dom{\sform{A}},\norm{\cdot}_{\sform{A}_\varepsilon^\lambda})\to(\mathfrak{H},\norm{\cdot})$ is compact for some $\lambda\in D$ and all $\varepsilon>0$.}\\

We define the projections as above and therefore do not repeat the definition again. However, we do define the functions $\Sigma$ and $\Sigma_{\varepsilon}$ again\footnote{Despite the slight abuse of notation, we do not alter the names $\Sigma$ and $\Sigma_{\varepsilon,n}$} as their ranges are now different. Now fix $\varepsilon^*>0$, and define the function
\begin{equation*}
\begin{aligned}
 \Sigma:[0,1]\times[0,\varepsilon^*]&\to(\text{closed bounded subsets of } (-\infty,1),d_H)\\
\Sigma(\lambda,\varepsilon)&=(-\infty,1)\cap\mathrm{sp}(\oper{M}^{\lambda}_{\varepsilon})\\
\end{aligned}
\end{equation*}
and for fixed $\varepsilon>0$ and $n\in\mathbb{N}$ the function
\begin{equation*}
\begin{aligned}
 \Sigma_{\varepsilon}:[0,1]\times\mathbb{N}&\to(\text{closed bounded subsets of } (-\infty,1),d_H)\\
\Sigma_{\varepsilon}(\lambda,n)&=(-\infty,1)\cap\mathrm{sp}(\widetilde{\oper{M}}^{\lambda}_{\varepsilon,n}).\\
\end{aligned}
\end{equation*}

\setcounter{theorem}{3}
 \begin{thmbis}\label{thm:main2}
In the semi-bounded case the mappings $\Sigma(\cdot,\cdot)$ and $\Sigma_{\varepsilon}(\cdot,n)$ are also continuous in their arguments, and as $n\to\infty$, $\Sigma_{\varepsilon}(\lambda,n)\to \Sigma(\lambda,\varepsilon)$ uniformly in $\lambda\in[0,1]$.
\end{thmbis}
\setcounter{theorem}{4}

In the subsequent sections we will prove \autoref{thm:main2} before proving \autoref{thm:main} in \autoref{sec:nonpositive}.
{\begin{remark}
As in \autoref{remark:topology}, here too the spectrum in $(-\infty,1)$ is discrete (with no accumulation points) so that there is no topological ambiguity when stating that a set is ``closed''. Note that as the operators $\oper{M}_\varepsilon^\lambda$ are semi-bounded the sets in question are indeed bounded. Hence, when restricted to these sets, the Hausdorff distance defines a metric.

Thus an immediate corollary of both theorems, by the Heine-Cantor theorem, is that the two maps $\Sigma(\cdot,\cdot)$ and $\Sigma_\varepsilon(\cdot,n)$  are in fact uniformly continuous.
\end{remark}}

\subsection{Discussion}
One of the main driving forces behind the study of linear operators in the 20th century was the development of quantum mechanics. Particular attention had been given to the characterisation of the spectra of such operators, as it encodes many important physical properties (such as energy levels, for instance). When operators become too complex, a typical approach is to view them as perturbations of simpler operators whose spectrum is well understood. Two of the classic texts on this topic are those written by Kato \cite{Kato1995} and Reed and Simon \cite{Reed1978}. Both are still widely cited to this day. We also refer to Simon's review paper \cite{Simon1991} and the references therein.

Recently, Hansen \cite{Hansen2008a} presented new techniques for approximating spectra of linear operators (self-adjoint and non-self-adjoint) from a more computational point of view. In \cite{Strauss2014}, Strauss presents a new method for approximating eigenvalues and eigenvectors of self-adjoint operators via an algorithm that is itself self-adjoint, and which does not produce spectral pollution. Both papers provide extensive references to additional literature in the field. We also mention  \cite{Kumar2014}, where  analysis similar to ours is performed for bounded operators. We note that spectral pollution (the appearance of spurious eigenvalues within gaps in the essential spectrum when approximating) has attracted significant attention \cite{Davies2004a,Levitin2004,Lewin2009}. {We do not encounter this issue here because of how the problem is set up: the trial spaces are (and therefore commute with) the spectral projectors of the block diagonal parts of the unperturbed operator, see e.g. \cite{Lewin2009} for more discussion of this topic.}

The question that we are motivated by is somewhat different. We are interested in the simultaneous approximation of \emph{families of operators}, rather than approximating a single fixed linear operator. This may be viewed as perturbation theory with two parameters: the continuous parameter $\lambda$ representing small continuous perturbations generating the family of operators, and the discrete parameter $n$ representing the dimension of the finite-dimensional approximation. One of the important aspects of this theory is that the finite-dimensional approximations {approximate  the entire family of operators uniformly in $\lambda$}. Previously, in \cite[Proposition 2.5]{Ben-Artzi2011b} a much weaker result of this type was obtained, where the resolvent set of Schr\"odinger operators with a compact resolvent was shown to be stable under similar perturbations. {We also mention \cite{Deconinck2006,Curtis2010,Johnson2012} where the convergence of the so-called \emph{Hill's method} (or Fourier-Floquet-Hill) is studied. This is a numerically-oriented method for studying spectra of periodic differential operators (not necessarily self-adjoint) and involves the truncation of the associated Fourier series. We refer in particular to \cite{Johnson2012} for an instance where this method is also applied to a \emph{family} of operators.}

There are two substantial difficulties in proving our results. If the spectrum of $\oper{A}^\lambda$ were discrete for some $\lambda$ (and therefore for all $\lambda$) we would have a natural way to construct approximations by projecting onto increasing subspaces associated to the eigenvalues of $\oper{M}^\lambda$. However we do not require the spectrum to be discrete, and, indeed, in the type of problems we have in mind it is not. This necessitates the introduction of yet another perturbation parameter, $\varepsilon$, related to the compactification of the resolvent. The other difficulty is in ensuring that the finite-dimensional approximations approximate the whole family of operators {uniformly in $\lambda$}. To this end, the compactness assumption (iv) plays a crucial role.

We make several remarks on \autoref{thm:main} and \autoref{thm:main2} and the assumptions (i)-(v):

\begin{remark}
The compactness requirements (iv) on $\oper{P}$ are motivated by \eqref{eq:non-positive}. If $\oper{A}$ has a compact resolvent (e.g. when acting in $L^2(\mathbb{T}^d)\oplus L^2(\mathbb{T}^d)$ where $\mathbb{T}^d$ is the $d$-dimensional torus)  we may take $\oper{P}$ to be the identity. Otherwise (e.g. for $L^2(\mathbb{R}^d)\oplus L^2(\mathbb{R}^d)$) if the perturbations $\oper{K}^\lambda$ are compactly supported in the sense that
\begin{equation}\label{Ksupportassumption}
 \bigcup_{\lambda\in[0,1],u\in\mathfrak{H}}\supp({\oper{K}^\lambda u})\subset K
\end{equation}
where $K=K_+\times K_-\subset\mathbb{R}^d\times\mathbb{R}^d$ is compact, then we may take $\oper{P}_\pm$ as multiplications by the indicator functions of the sets $K_\pm$. Indeed, we first note that \eqref{Ksupportassumption} implies that for all $\lambda$, $\oper{K}^\lambda=\oper{P}\oper{K}^\lambda$. Then as $\oper{K}^\lambda$ and $\oper{P}$ are symmetric, we deduce that $\oper{K}^\lambda=(\oper{K}^\lambda)^*=(\oper{K}^\lambda)^*\oper{P}^*=\oper{K}^\lambda\oper{P}$ as required. That $\oper{P}$ is relatively compact with respect to $-\Delta$ follows from Rellich's theorem. We also remark that this choice of $\oper{P}$ is in fact the natural inclusion map from $L^2$ to $L^2(K)$.
\end{remark}

\begin{remark}\label{rek:thm}
Care must be taken regarding the spaces we view operators as acting on. If we view $\oper{M}_{\varepsilon,n}^\lambda=\mathcal{G}^\lambda_{\varepsilon,n}\mathcal{M}^\lambda_\varepsilon\mathcal{G}^\lambda_{\varepsilon,n}:\mathfrak{H}\to\mathfrak{H}$ then $0$ will \emph{always} be a spurious eigenvalue with infinite multiplicity.
To remove this unwanted eigenvalue we must instead consider $\widetilde{\oper{M}}_{\varepsilon,n}^\lambda:\mathfrak{H}^\lambda_{\varepsilon,n}\to\mathfrak{H}^\lambda_{\varepsilon,n}$ where $\mathfrak{H}^\lambda_{\varepsilon,n}=\oper{G}^\lambda_{\varepsilon,n}(\mathfrak{H})$ is the $n$-dimensional space corresponding to the eigenprojection $\oper{G}^\lambda_{\varepsilon,n}$.
\end{remark}

\begin{remark}\label{rek:alpha}
Property (ii) implies that  there exists $\alpha(\lambda)>0$ such that $(-\alpha(\lambda)-1,1+\alpha(\lambda))$ is in the resolvent set of $\oper{A}^\lambda$. Since the spectrum is continuous in $\lambda\in[0,1]$ this implies that there is a uniform constant $\alpha>0$ such that $(-\alpha-1,1+\alpha)$ is in the resolvent set of $\oper{A}^\lambda$ for all $\lambda\in[0,1]$.
\end{remark}

{\begin{remark}
We finally remark that the construction of a compactifying  operator $\oper{W}$ in general is not easy. We have in mind an application to a case where this is applied to $-\Delta$ and then it is simple: any unbounded potential will do.
\end{remark}}

This paper is organised as follows. In \autoref{sec:preliminary} we present some results related to general properties (such as self-adjointness, equivalence of norms, etc.) of the various operators. In \autoref{sec:approx} we construct the finite-dimensional approximations to our family of operators, which are used in \autoref{sec:proof} to prove \autoref{thm:main2}. In \autoref{sec:nonpositive} these results are extended to families of operators which are not positive, proving \autoref{thm:main}. Finally, in \autoref{sec:application} we give a brief description of an application of these results related to plasma instabilities, which is the subject of \cite{Ben-Artzi2013a} where one can find the full details.

\section{Preliminary results}\label{sec:preliminary}
\emph{We remind the reader that in this section, as well as in \autoref{sec:approx} and \autoref{sec:proof} we treat the semi-bounded case (\autoref{thm:main2}).} 

\bigskip

Considering the definition \eqref{Afamilydef} and the subsequent specifications of the properties of the various operators and associated forms, we have the following results.
{\begin{lemma}\label{lem:basic-properties1}
The forms $\sform{m}^\lambda$  have  the same domains as the forms $\sform{a}^\lambda$, and  are independent of $\lambda$. For any $\lambda\in[0,1]$, $\oper{M}^\lambda$ is self-adjoint and has the same essential spectrum and domain as $\oper{A}^\lambda$. In particular its spectrum inside $(-\infty,1]$ is discrete.
\end{lemma}
\begin{proof}
The equality $\dom{\sform{M}^\lambda}=\dom{\sform{A}^\lambda}$ holds since $\oper{K}^\lambda$ is bounded for each $\lambda$. The fact that the domains are independent of $\lambda$ was assumed above in the sectoriality assumption (i). Self-adjointness follows from the Kato-Rellich theorem, due to $\oper{A}^\lambda$ being self-adjoint for $\lambda\in[0,1]$ and the symmetry assumption (iii) on $\oper{K}^\lambda$. The essential spectrum result follows from Weyl's theorem as $\oper{K^\lambda}=\oper{K^\lambda}\oper{P}$ is relatively compact with respect to $\oper{A}^\lambda$ (for any $\lambda$) because $\oper{P}$ is.
\end{proof}}

Next, we turn our attention to the map $\lambda\mapsto\oper{M}^\lambda$. Intuitively, one would expect $\oper{M}^\lambda$ to have continuity properties similar to those of $\oper{K}^\lambda$ and therefore be merely continuous in the strong resolvent sense. In fact, due to the relative compactness assumption on $\oper{P}$ we have more:
\begin{proposition}\label{lem:cont-alambda}
The family $\{\oper{M}^\lambda\}_{\lambda\in[0,1]}$ is norm resolvent continuous. 
\end{proposition}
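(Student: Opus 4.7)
The plan is as follows. By the first resolvent identity it is enough to prove that $\lambda \mapsto (\oper{M}^\lambda - z)^{-1}$ is norm continuous at a single value of $z$ in the common resolvent set of $\{\oper{M}^\lambda\}_{\lambda \in [0,1]}$. Since $\oper{A}^\lambda > 1$ by (ii), we have $\|(\oper{A}^\lambda - z)^{-1}\| \le (1-z)^{-1}$ for $z \le 0$, and strong continuity on the compact parameter set $[0,1]$ together with Banach--Steinhaus gives $\sup_\lambda \|\oper{K}^\lambda\| < \infty$; I would therefore fix a single real $z = -R$ with $R$ so large that $\|(\oper{A}^\lambda - z)^{-1} \oper{K}^\lambda\| \le 1/2$ uniformly in $\lambda$. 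The identity
\begin{equation*}
(\oper{M}^\lambda - z)^{-1} = \bigl[I + (\oper{A}^\lambda - z)^{-1} \oper{K}^\lambda\bigr]^{-1} (\oper{A}^\lambda - z)^{-1}
\end{equation*}
is then valid, and via the Neumann series for the bracketed inverse reduces the problem to norm continuity of $\lambda \mapsto (\oper{A}^\lambda - z)^{-1} \oper{K}^\lambda$.

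Norm continuity of $\lambda \mapsto (\oper{A}^\lambda - z)^{-1}$ on $[0,1]$ is immediate from the Sectoriality assumption (i), since $\oper{A}^\lambda$ is a holomorphic family of type (B) and its resolvent is therefore norm-holomorphic on $D_0$. Expanding the difference of products at a fixed $\lambda_0$ produces the two usual terms
\begin{equation*}
\bigl[(\oper{A}^\lambda - z)^{-1} - (\oper{A}^{\lambda_0} - z)^{-1}\bigr]\oper{K}^\lambda \;+\; (\oper{A}^{\lambda_0} - z)^{-1}\bigl[\oper{K}^\lambda - \oper{K}^{\lambda_0}\bigr],
\end{equation*}
and the first summand tends to zero in norm because the $\oper{A}$-resolvent difference does and $\oper{K}^\lambda$ is uniformly bounded.

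The second summand is the heart of the matter, as $\oper{K}^\lambda - \oper{K}^{\lambda_0}$ is only strongly, not uniformly, null. Here I would use the compactness assumption (iv): the symmetry of both $\oper{K}^\lambda$ and $\oper{P}$, combined with $\oper{K}^\lambda = \oper{K}^\lambda \oper{P}$, also implies $\oper{K}^\lambda = \oper{P}\oper{K}^\lambda$. Taking adjoints and using that $z$ is real,
\begin{equation*}
\|(\oper{A}^{\lambda_0} - z)^{-1}(\oper{K}^\lambda - \oper{K}^{\lambda_0})\| \;=\; \|(\oper{K}^\lambda - \oper{K}^{\lambda_0})\,\oper{P}\,(\oper{A}^{\lambda_0} - z)^{-1}\|.
\end{equation*}
The composite $\oper{P}(\oper{A}^{\lambda_0} - z)^{-1}$ is a fixed compact operator on $\mathfrak{H}$: the resolvent maps boundedly into $(\dom{\sform{a}^{\lambda_0}}, \|\cdot\|_{\sform{a}^{\lambda_0}})$, from which $\oper{P}$ is compact into $\mathfrak{H}$ by (iv). An $\varepsilon$-net argument on the precompact image of the unit ball then converts $\oper{K}^\lambda \tos \oper{K}^{\lambda_0}$ into norm convergence of the right-hand side to zero.

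The main obstacle is precisely this last step, where the order of composition is essential: the naive arrangement, compact followed by a strongly-null sequence, can fail to converge in norm (standard shift examples). The structural assumption $\oper{K}^\lambda = \oper{K}^\lambda \oper{P}$ together with symmetry is exactly what lets us, after taking adjoints, place the compact factor on the right of the strongly-null sequence and recover norm convergence. Once this step is in hand, the Neumann series gives norm continuity of $[I + (\oper{A}^\lambda - z)^{-1}\oper{K}^\lambda]^{-1}$ in $\lambda$, and assembling the pieces yields norm continuity of $(\oper{M}^\lambda - z)^{-1}$ at the chosen $z$, which propagates to the whole resolvent set by the first resolvent identity.
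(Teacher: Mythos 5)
Your proposal is correct and hinges on exactly the same key step as the paper's proof: using $\oper{K}^\lambda=\oper{K}^\lambda\oper{P}$ together with the relative compactness of $\oper{P}$ to place a fixed compact operator next to the strongly-null sequence $\oper{K}^\lambda-\oper{K}^{\lambda_0}$, thereby upgrading strong to norm convergence. The surrounding bookkeeping differs slightly --- the paper evaluates the resolvent at $\pm i$, passes through the intermediate operator $\oper{A}^{\lambda_n}+\oper{K}^\lambda$, and uses the second Neumann series to isolate $(\oper{K}^{\lambda_n}-\oper{K}^\lambda)(\oper{A}^\lambda+\oper{K}^\lambda+i)^{-1}$, whereas you fix a large negative real shift so you can take adjoints to move $\oper{P}$ to the correct side --- but these are equivalent rearrangements of the same argument, with no substantive gap (noting only that the real shift exploits semi-boundedness and so is specific to the $\mathfrak H=\mathfrak H_+$ setting of this section, which is fine since the paper extends the result to the block case separately).
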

\begin{proof}
Fix some $\lambda\in[0,1]$ and let $[0,1]\ni\lambda_n\to\lambda$ as $n\to\infty$. It is sufficient to prove 
\begin{equation*}
 \norm{(\oper{M}^{\lambda_n}+i)^{-1}-(\oper{M}^\lambda+i)^{-1}}_{\bounded{\mathfrak{H}}}\to0 \text{ as }n\to\infty.
\end{equation*}
Using the triangle inequality we have
\begin{equation*}
\begin{aligned}
\norm{(\oper{M}^{\lambda_n}+i)^{-1}-(\oper{M}^\lambda+i)^{-1}}_{\bounded{\mathfrak{H}}}&\le\norm{(\oper{M}^{\lambda_n}+i)^{-1}-(\oper{A}^{\lambda_n}+\oper{K}^\lambda+i)^{-1}}_{\bounded{\mathfrak{H}}}\\
&\quad+\norm{(\oper{A}^{\lambda_n}+\oper{K}^\lambda+i)^{-1}-(\oper{M}^{\lambda}+i)^{-1}}_{\bounded{\mathfrak{H}}}.
\end{aligned}
\end{equation*}
By observing that $\{\oper{A}^\sigma+\oper{K}^\lambda\}_{\sigma\in D}$ is also a holomorphic family of type (B) we deduce that the second term tends to zero as $n\to\infty$. For the first term we follow the method used to deduce the second Neumman series (see \cite[II-(1.13)]{Kato1995})
\begin{equation*}
(\oper{A}^{\lambda_n}+\oper{K}^{\lambda_n}+i)^{-1}=(\oper{A}^{\lambda_n}+\oper{K}^\lambda+i)^{-1}(1+(\oper{K}^{\lambda_n}-\oper{K}^\lambda)(\oper{A}^{\lambda_n}+\oper{K}^{\lambda}+i)^{-1})^{-1}
\end{equation*}
which is valid whenever $\norm{(\oper{K}^{\lambda_n}-\oper{K}^\lambda)(\oper{A}^{\lambda_n}+\oper{K}^{\lambda}+i)^{-1}}_{\bounded{\mathfrak{H}}}<1$. By the norm resolvent continuity of operator inversion and again using the norm resolvent continuity of the family $\{\oper{A}^\sigma+\oper{K}^\lambda\}_{\sigma\in [0,1]}$, it is sufficient to show that
\begin{equation}\label{eq:sufficient to show that}
\norm{(\oper{K}^{\lambda_n}-\oper{K}^\lambda)(\oper{A}^{\lambda}+\oper{K}^{\lambda}+i)^{-1}}_{\bounded{\mathfrak{H}}}\to0\text{ as }n\to\infty.
\end{equation}
We observe that $\oper{A}^\lambda+\oper{K}^\lambda$ is self-adjoint with the same domain as $\oper{A}^\lambda$ by \autoref{lem:basic-properties1}, so $\oper{P}$ is also relatively compact with respect to $\oper{A}^\lambda+\oper{K}^\lambda$. By assumption (iv) we have
\begin{equation*}
(\oper{K}^{\lambda_n}-\oper{K}^\lambda)(\oper{A}^{\lambda}+\oper{K}^{\lambda}+i)^{-1}=(\oper{K}^{\lambda_n}-\oper{K}^\lambda)\oper{P}(\oper{A}^{\lambda}+\oper{K}^{\lambda}+i)^{-1}.
\end{equation*}
This is a composition of a strongly convergent sequence of operators and the compact operator $\oper{P}(\oper{A}^{\lambda}+\oper{K}^{\lambda}+i)^{-1}$. The compactness converts the strong convergence to norm convergence and proves \eqref{eq:sufficient to show that}.
\end{proof}

\section{Constructing approximations}\label{sec:approx}
We first treat approximations of operators with discrete spectra, which are naturally defined via a sequence of increasing projection operators. For brevity, we call these approximations \emph{$n$-approximations} (``$n$'' refers to the dimension of the projection). Then, our strategy when treating operators with a \emph{continuous} spectrum is to first ``perturb'' them by adding a family of unbounded operators (think of adding an unbounded potential to a Laplacian) depending upon a small parameter $\varepsilon$. For each $\varepsilon>0$ these perturbations are assumed to eliminate any continuous spectrum, so that then we may apply an $n$-approximation. We therefore call these \emph{$(\varepsilon,n)$-approximations}.
We start with a standard result for which we could not find a good reference and we therefore state and prove it here.
\begin{lemma}\label{lem:appendix1}
Let $\spac{H}$ be a Hilbert space and let $\oper{T}_n\tosr \oper{T}$ as $n\to\infty$ with $\oper{T}_n,\oper{T}$ self-adjoint operators on $\spac{H}$. Let $\oper{K}_n\tos\oper{K}$ as $n\to\infty$ with $\oper{K}_n,\oper{K}$ bounded {self-adjoint} operators on $\spac{H}$. Then $\oper{T}_n+\oper{K}_n$ and $\oper{T}+\oper{K}$ are self-adjoint {in $\spac{H}$} and $\oper{T}_n+\oper{K}_n\tosr\oper{T}+\oper{K}$.
\end{lemma}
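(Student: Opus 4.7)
The plan is to verify self-adjointness first, then reduce strong resolvent convergence of the sum to two pieces, each of which can be handled separately.

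\textbf{Step 1 (self-adjointness).} Since $\oper{K}_n$ and $\oper{K}$ are bounded and symmetric, they are trivially relatively bounded (with relative bound $0$) with respect to any self-adjoint operator. Hence by the Kato--Rellich theorem both $\oper{T}_n+\oper{K}_n$ and $\oper{T}+\oper{K}$ are self-adjoint on $\dom{\oper{T}_n}$ and $\dom{\oper{T}}$ respectively.

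\textbf{Step 2 (splitting the resolvent).} It suffices to show that $(\oper{T}_n+\oper{K}_n-i)^{-1}u\to(\oper{T}+\oper{K}-i)^{-1}u$ strongly for every $u\in\spac{H}$. Using the identity
\[
(\oper{T}_n+\oper{K}_n-i)^{-1}-(\oper{T}_n+\oper{K}-i)^{-1}=(\oper{T}_n+\oper{K}_n-i)^{-1}(\oper{K}-\oper{K}_n)(\oper{T}_n+\oper{K}-i)^{-1},
\]
I will split
\[
(\oper{T}_n+\oper{K}_n-i)^{-1}-(\oper{T}+\oper{K}-i)^{-1}=R_n^{(1)}+R_n^{(2)},
\]
where $R_n^{(1)}$ is the difference above and $R_n^{(2)}=(\oper{T}_n+\oper{K}-i)^{-1}-(\oper{T}+\oper{K}-i)^{-1}$, and show each term tends to zero strongly.

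\textbf{Step 3 (convergence of $R_n^{(2)}$).} This is the standard fact that bounded symmetric perturbation preserves strong resolvent convergence: writing
\[
R_n^{(2)}=(\oper{T}_n+\oper{K}-i)^{-1}(\oper{T}-\oper{T}_n)(\oper{T}+\oper{K}-i)^{-1},
\]
(interpreted appropriately on the common core) or, more directly, arguing from $\oper{T}_n\tosr\oper{T}$ together with a second-resolvent-identity manipulation to show that $\oper{T}_n+\oper{K}\tosr\oper{T}+\oper{K}$. This yields $R_n^{(2)}u\to 0$ strongly.

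\textbf{Step 4 (convergence of $R_n^{(1)}$).} Setting $v_n:=(\oper{T}_n+\oper{K}-i)^{-1}u$ and $v:=(\oper{T}+\oper{K}-i)^{-1}u$, Step 3 gives $v_n\to v$ strongly. By the Banach--Steinhaus theorem applied to the strongly convergent sequence $\oper{K}_n$, the operators $\{\oper{K}_n\}$ are uniformly bounded; combining uniform boundedness of $\oper{K}_n$ with $v_n\to v$ and the strong convergence $\oper{K}_nv\to\oper{K}v$ gives $(\oper{K}-\oper{K}_n)v_n\to 0$ strongly. Finally, $(\oper{T}_n+\oper{K}_n-i)^{-1}$ is bounded uniformly by $1$ in operator norm (self-adjointness and $\operatorname{Im}(i)=1$), so
\[
R_n^{(1)}u=(\oper{T}_n+\oper{K}_n-i)^{-1}(\oper{K}-\oper{K}_n)v_n\to 0.
\]

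\textbf{Main obstacle.} The only mildly subtle point is Step 4, where one must be careful that $(\oper{K}-\oper{K}_n)v_n\to 0$ even though neither factor is constant; this is where the uniform boundedness of $\{\oper{K}_n\}$ (via Banach--Steinhaus) and the strong convergence $v_n\to v$ must be combined via the standard ``mixed strong convergence'' trick $\oper{K}_nv_n-\oper{K}v=\oper{K}_n(v_n-v)+(\oper{K}_n-\oper{K})v$.
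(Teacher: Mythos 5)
Your proof is correct, and it takes a genuinely different route from the paper's. The paper works entirely with the second Neumann series: picking $\alpha=2M$ with $M$ the uniform bound on $\|\oper{K}_n\|$ (from Banach--Steinhaus) so that $\|\oper{K}_n(\oper{T}_n+\alpha i)^{-1}\|\le 1/2$, expanding
\[
(\oper{T}_n+\oper{K}_n+\alpha i)^{-1}=(\oper{T}_n+\alpha i)^{-1}\sum_{k\ge0}(-1)^k\bigl(\oper{K}_n(\oper{T}_n+\alpha i)^{-1}\bigr)^k,
\]
and then interchanging the $n\to\infty$ limit with the sum because the series converges uniformly in $n$; each term converges strongly by composition of strong convergences. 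Your proof instead splits the resolvent difference into two pieces via the second resolvent identity and reduces to the case of a fixed bounded symmetric perturbation (Step 3) plus a ``residual'' term $R_n^{(1)}$ controlled by uniform boundedness and the mixed-strong-convergence trick (Step 4). This is more modular: the variable nature of $\oper{K}_n$ is isolated into a single term where only boundedness and strong convergence are needed. The paper's Neumann-series argument is more self-contained in that it does not delegate to a quoted standard fact, and treats the whole perturbation at once. Note that the fact you cite in Step 3 (bounded symmetric perturbations preserve strong resolvent convergence) is itself most commonly proved by exactly the Neumann-series device the paper uses, so the two approaches share the same underlying engine. One small caveat: your first display in Step 3, $R_n^{(2)}=(\oper{T}_n+\oper{K}-i)^{-1}(\oper{T}-\oper{T}_n)(\oper{T}+\oper{K}-i)^{-1}$, is purely formal --- the lemma does not assume $\dom{\oper{T}_n}=\dom{\oper{T}}$, so $\oper{T}-\oper{T}_n$ need not be densely defined --- but you already hedge this and fall back on the standard fact, so the argument stands.
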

\begin{proof}
The self-adjointness follows from the Kato-Rellich theorem. For the convergence it is sufficient to prove that $(\oper{T}_n+\oper{K}_n+\alpha i)^{-1}\tos(\oper{T}+\oper{K}+\alpha i)^{-1}$ for some real $\alpha\ne0$. As the $\oper{K}_n$ are strongly convergent, by the uniform boundedness principle they are uniformly bounded in operator norm by some $M\ge\norm{\oper{K}}_{\bounded{\spac{H}}}$. Letting $\alpha=2M$, and using the second Neumann series,
\begin{equation*}
\begin{aligned}
(\oper{T}_n+\oper{K}_n+\alpha i)^{-1}&=(\oper{T}_n+\alpha i)^{-1}(1+\oper{K}_n(\oper{T}_n+\alpha i)^{-1})^{-1}\\
&=(\oper{T}_n+\alpha i)^{-1}\sum_{k=0}^\infty(-1)^k(\oper{K}_n(\oper{T}_n+\alpha i)^{-1})^k
\end{aligned}
\end{equation*}
is convergent uniformly in $n$ as $\norm{\oper{K}_n(\oper{T}_n+\alpha i)^{-1}}_{\bounded{\spac{H}}}\le M/\alpha=1/2<1$.  As $n\to\infty$ each term of the series converges strongly to the corresponding term of the series for $(\oper{T}+\oper{K}+\alpha i)^{-1}$ and as the series convergences uniformly in $n$ we may may swap the order of summation and {take} strong limits.
\end{proof}

\subsection{Operators with discrete spectra}
In this paragraph we assume that $\oper{A}^\lambda$ has discrete spectrum and compact resolvent for some $\lambda$ (and, in fact, for all $\lambda$, as $\oper{A}^\lambda$ is a holomorphic family of type (B)\footnote{See property (i) in \autoref{sec:main-result} for a precise definition.}). We exploit a property of self-adjoint holomorphic families \cite[VII Theorem 3.9 and VII Remark 4.22]{Kato1995}: {all eigenvalues of $\oper{A}^\lambda$ can be represented by functions which are holomorphic on $[0,1]$. That is, there exists a sequence of scalar-valued functions $\{\mu_k^\lambda\}_{k\in\mathbb{N}}$ which are all holomorphic functions of  $\lambda\in[0,1]$ that represents all the repeated eigenvalues of $\oper{A}^\lambda$. Moreover, there exists  a sequence of vector-valued functions $\{e_k^\lambda\}_{k\in\mathbb{N}}$ which are all also holomorphic functions of $\lambda\in[0,1]$ such that for every $\lambda\in[0,1]$, $\{e_k^\lambda\}_{k\in\mathbb{N}}$ form a complete orthonormal family of corresponding eigenvectors.} An immediate consequence is that the unitary operator defined by
\begin{equation*}
\begin{aligned}
\oper{U}^\lambda_\sigma&:\mathfrak{H}\to\mathfrak{H}\\
e_k^\sigma&\mapsto e_k^\lambda&\text{ for any }k\in\mathbb{N}
\end{aligned}
\end{equation*}
is jointly holomorphic in $\lambda,\sigma\in[0,1]${, i.e. possesses  a locally convergent power series in the two variables $\lambda,\sigma$}. We now define the $n$-truncation operator by
\begin{equation*}
\begin{aligned}
\oper{G}^\lambda_n&:\mathfrak{H}\to\mathfrak{H}\\
e_k^\lambda&\mapsto\begin{cases}
e_k^\lambda&\text{ if }k\le n,\\
0&\text{ if }k>n.
\end{cases}
\end{aligned}
\end{equation*}
Since the eigenfunctions form a complete orthonormal set we have the convergence $\oper{G}^\lambda_n\tos1$ as $n\to\infty$ for fixed $\lambda$. Additionally by expressing $\oper{G}^\lambda_n=\oper{U}^\lambda_\sigma\oper{G}^\sigma_n\oper{U}^\sigma_\lambda$ for some fixed $\sigma\in[0,1]$ we see that $\oper{G}^\lambda_n\tos 1$ as $n\to\infty$. Moreover,  for any sequence $\lambda_n\to\lambda$ we have $\oper{G}^{\lambda_n}_n\tos 1$ as $n\to\infty$. For notational convenience we define $\oper{G}^\lambda_\infty=1$ for all $\lambda\in[0,1]$.

We now define the finite-dimensional approximations of  $\oper{A}^\lambda$ and $\oper{M}^\lambda$ by
\begin{equation}\label{Andef}
\oper{A}^\lambda_n=\oper{G}^\lambda_n\oper{A}^\lambda\oper{G}^\lambda_n\quad\text{and}\quad\oper{M}^\lambda_n=\oper{G}^\lambda_n\oper{M}^\lambda\oper{G}^\lambda_n,
\end{equation}
respectively. 
It is too much to hope for convergence $\oper{M}_n^\lambda\tonr\oper{M}^\lambda$ as $n\to\infty$, but we can hope for  $\oper{M}_n^\lambda\tosr\oper{M}^\lambda$. Indeed:
\begin{lemma}\label{src of truncation}
For any sequence $\lambda_n\to\lambda\in[0,1]$ as $n\to\infty$, we have the convergence $\oper{M}^{\lambda_n}_n\tosr \oper{M}^\lambda$.
\end{lemma}
\begin{proof}

By the stability of strong resolvent continuity with respect to bounded strongly continuous perturbations (see \autoref{lem:appendix1}), it is sufficient to prove that $\oper{A}^{\lambda_n}_{n}\tosr\oper{A}^\lambda$ as $n\to\infty$ and that $\oper{G}^{\lambda_n}_{n}\oper{K}^{\lambda_n}\oper{G}^{\lambda_n}_{n}\tos\oper{K}^\lambda$. The latter is true as it is the composition of strong convergences  of bounded operators. For the former it is sufficient to show that $(\oper{A}^{\lambda_n}_{n}+i)^{-1}\tos(\oper{A}^\lambda+i)^{-1}$ as $n\to\infty$. Splitting this term as
\begin{equation*}
(\oper{A}^{\lambda_n}_{n}+i)^{-1}=\oper{G}^{\lambda_n}_{n}(\oper{A}^{\lambda_n}_{n}+i)^{-1}\oper{G}^{\lambda_n}_{n}+(1-\oper{G}^{\lambda_n}_{n})(\oper{A}^{\lambda_n}_{n}+i)^{-1}(1-\oper{G}^{\lambda_n}_{n}),
\end{equation*}
(where we have used the fact that $\oper{G}^{\lambda_n}_{n}$ is a spectral projection which commutes with $(\oper{A}^{\lambda_n}_{n}+i)^{-1}$),
we see that the second term converges strongly to zero since $(\oper{A}^{\lambda_n}_{n}+i)^{-1}$ is uniformly bounded and since $\oper{G}^{\lambda_n}_{n}\tos1$. For the first term on the right hand side, note that
\begin{equation*}
\oper{G}^{\lambda_n}_{n}(\oper{A}^{\lambda_n}_{n}+i)^{-1}\oper{G}^{\lambda_n}_{n}=\oper{G}^{\lambda_n}_{n}(\oper{A}^{\lambda_n}+i)^{-1}\oper{G}^{\lambda_n}_{n}
\end{equation*}
which converges strongly to $(\oper{A}^\lambda+i)^{-1}$ by the composition of strong convergences.
\end{proof}

\subsection{Operators with continuous spectra}
We are now ready to turn to the general case of families $\{\oper{A}^\lambda\}_{\lambda\in[0,1]}$ that may have continuous spectra. Such operators require $(\varepsilon,n)$-approximations. The $\varepsilon$-approximations $\oper{A}^\lambda_\varepsilon$ of $\oper{A}^\lambda$ were defined in \eqref{eq:a-epsilon} and the corresponding approximations $\mathcal{M}^\lambda_\varepsilon$ were defined in \eqref{eq:m-epsilon}.
\begin{lemma}
\begin{enumerate}
 \item For any $\varepsilon>0$, $\{\oper{A}^\lambda_\varepsilon\}_{\lambda\in D}$ is a holomorphic family of type (B) with compact resolvent.
\item For any $\lambda\in[0,1],\varepsilon\ge0$, $\oper{A}_\varepsilon^\lambda$ is self-adjoint and we have $\oper{A}_\varepsilon^\lambda\ge\oper{A}^\lambda\ge1+\alpha$, where $\alpha$ was defined in \autoref{rek:alpha}.
\end{enumerate}
\end{lemma}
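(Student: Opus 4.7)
The plan is to prove both parts by working at the level of sesquilinear forms and then invoking the first representation theorem (Kato \cite{Kato1995}, VI-Theorem~2.1), together with the definition of holomorphic families of type (a)/(B) and a connectedness argument for the compact resolvent assertion.

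For part~(1), I first observe that $\{\sform{a}^\lambda\}_{\lambda\in D_0}$ and $\{\sform{w}^\lambda\}_{\lambda\in D_0}$ are each holomorphic families of type (a) by assumptions (i) and (v), and in particular have $\lambda$-independent dense domains $\dom{\sform{a}}$ and $\dom{\sform{w}}$. Their sum $\sform{a}^\lambda_\varepsilon=\sform{a}^\lambda+\varepsilon\sform{w}^\lambda$ therefore carries the $\lambda$-independent domain $\dom{\sform{a}}\cap\dom{\sform{w}}$, which is dense in $\mathfrak{H}$ by (v); it is sectorial for each $\lambda\in D_0$ as a sum of sectorials on a common domain, symmetric for real $\lambda$, and $\lambda\mapsto\sform{a}^\lambda_\varepsilon[u,v]$ is holomorphic for every pair $u,v$ in this domain. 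Hence $\{\sform{a}^\lambda_\varepsilon\}$ is a holomorphic family of type (a), so by definition the associated operators $\{\oper{A}^\lambda_\varepsilon\}$ form a holomorphic family of type (B).

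For the compact resolvent claim, I pick the reference point $\lambda_0\in D_0$ supplied by assumption (v). On $\dom{\oper{A}^{\lambda_0}_\varepsilon}$ the operator graph norm dominates the form graph norm $\norm{\cdot}_{\sform{A}^{\lambda_0}_\varepsilon}$ (a standard consequence of the representation theorem), so the assumed compactness of $(\dom{\sform{W}}\cap\dom{\sform{A}},\norm{\cdot}_{\sform{A}^{\lambda_0}_\varepsilon})\to(\mathfrak{H},\norm{\cdot})$ upgrades to compactness of $(\dom{\oper{A}^{\lambda_0}_\varepsilon},\norm{\cdot}_{\oper{A}^{\lambda_0}_\varepsilon})\to(\mathfrak{H},\norm{\cdot})$, equivalently to compactness of $(\oper{A}^{\lambda_0}_\varepsilon-z)^{-1}$ for any $z$ in the resolvent set. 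To extend this to every $\lambda\in D_0$, I use that $\{\oper{A}^\lambda_\varepsilon\}$ is type (B), so $\lambda\mapsto(\oper{A}^\lambda_\varepsilon-z)^{-1}$ is locally norm-continuous on the joint resolvent set, while the compact operators form a norm-closed ideal in $\bounded{\mathfrak{H}}$. The set of $\lambda\in D_0$ for which the resolvent is compact is therefore both open and closed, and (taking $D_0$ to be the connected component of the given neighbourhood containing $[0,1]$) equals all of $D_0$ by connectedness.

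For part~(2), fix $\lambda\in[0,1]$ and $\varepsilon\ge 0$. By the remark following assumption (ii), $\sform{a}^\lambda$ is symmetric with lower bound $1+\alpha$, and by the non-negativity of $\oper{W}^\lambda$ on $[0,1]$, $\sform{w}^\lambda$ is symmetric and non-negative. Their sum is thus a closed, densely-defined, symmetric form with lower bound $1+\alpha$, and the first representation theorem yields that $\oper{A}^\lambda_\varepsilon$ is self-adjoint with $\oper{A}^\lambda_\varepsilon\ge 1+\alpha$. The operator inequality $\oper{A}^\lambda_\varepsilon\ge\oper{A}^\lambda$ follows from the pointwise form inequality $\sform{a}^\lambda_\varepsilon[u]-\sform{a}^\lambda[u]=\varepsilon\sform{w}^\lambda[u]\ge 0$ on the smaller domain $\dom{\sform{a}^\lambda_\varepsilon}\subset\dom{\sform{a}^\lambda}$, which translates into the quadratic-form ordering of the associated semi-bounded self-adjoint operators. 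The only mildly delicate step in all of this is the propagation of compactness from the single reference point $\lambda_0$ in (v) to the whole of $D_0$; the type (B) structure established in part~(1) is exactly what supplies the norm-continuity needed for the open/closed argument to close.
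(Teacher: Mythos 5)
Your treatment of part (2) and of the type-(a)/type-(B) verification in part (1) follows essentially the same form-level route as the paper: sum the two sectorial type-(a) families, observe that the common domain $\dom{\sform{a}}\cap\dom{\sform{w}}$ is $\lambda$-independent and dense, check holomorphicity of $\lambda\mapsto\sform{a}^\lambda_\varepsilon[u]$, and invoke the representation theorem for self-adjointness and the lower bound. Those steps are correct.

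Where you diverge is the compact-resolvent claim. The paper tacitly uses that compactness of the resolvent at one $\lambda$ propagates to all of $D_0$; you try to justify this explicitly with a clopen-and-connected argument. The instinct is sound, but as written the openness half has a gap. Norm-continuity of $\lambda\mapsto(\oper{A}^\lambda_\varepsilon-z)^{-1}$ combined with norm-closedness of the compacts shows only that $\{\lambda\in D_0:(\oper{A}^\lambda_\varepsilon-z)^{-1}\text{ is compact}\}$ is closed, being the preimage of a closed set under a continuous map; it does not show openness, since a non-compact operator can be arbitrarily norm-close to a compact one. To obtain openness you must use the ideal structure concretely: for $\lambda$ near $\lambda_0$ and $z$ in both resolvent sets, the second Neumann series yields $(\oper{A}^\lambda_\varepsilon-z)^{-1}=(\oper{A}^{\lambda_0}_\varepsilon-z)^{-1}\bigl(1+B(\lambda)\bigr)^{-1}$ with $B(\lambda)\in\bounded{\mathfrak{H}}$ of small norm, so compactness passes from $\lambda_0$ to $\lambda$ because the compacts form a two-sided ideal in $\bounded{\mathfrak{H}}$. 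This argument in fact makes the good set clopen directly and renders the norm-closedness observation superfluous. Alternatively, and this is what the paper relies on implicitly, for a holomorphic family of type (a) the form norms $\norm{\cdot}_{\sform{a}^\lambda_\varepsilon}$ at different $\lambda$ are mutually equivalent on the common domain, so compactness of the embedding into $\mathfrak{H}$ is automatically $\lambda$-independent; this is the standard fact that a holomorphic family of type (B) on a connected domain has compact resolvent for every $\lambda$ as soon as it does for one, and it avoids the continuation argument altogether.
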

\begin{proof}
The second claim is obvious since $\oper{W}^\lambda\ge0$. For the first we must show that $\sform{A}_\varepsilon^\lambda$ is sectorial and that its domain $\dom{\sform{A}_\varepsilon^\lambda}$ is independent of $\lambda$ and dense in $\mathfrak{H}$, and that for any fixed $u\in\dom{\sform{A}_\varepsilon^\lambda}$ the function $\sform{A}^\lambda_\varepsilon[u]$ is holomorphic in $\lambda\in D$. For any $\lambda\in D$, $\sform{A}_\varepsilon^\lambda$ is the sum of the sectorial forms $\sform{a}^\lambda$ and $\varepsilon \sform{W}^\lambda$ so by \cite[VI\S1.6-Theorem 1.33]{Kato1995} it is closed and sectorial with domain $\dom{\sform{A}}\cap\dom{\sform{W}^\lambda}$, which is independent of $\lambda$ since both $\oper{A}^\lambda$ and $\oper{W}^\lambda$ are holomorphic families of type (B). Furthermore, we assumed that $\dom{\sform{A}}\cap\dom{\sform{W}^\lambda}$ is dense in $\mathfrak{H}$. For any fixed $u\in\dom{\sform{A}_\varepsilon^\lambda}$, $\sform{A}^\lambda_\varepsilon[u]=\sform{A}^\lambda[u]+\varepsilon\sform{W}^\lambda[u]$ is the sum of two holomorphic functions of $\lambda\in D$, so $\sform{A}^\lambda_\varepsilon[u]$ is also holomorphic in $D$. Finally by the assumption that the inclusion  $(\dom{\sform{A}_\varepsilon^\lambda},\norm{\cdot}_{\sform{A}_\varepsilon^\lambda})\hookrightarrow\mathfrak{H}$ is compact we deduce that the resolvent of $\oper{A}^\lambda_\varepsilon$ is compact.
\end{proof}

For each $\varepsilon>0$ the operator $\mathcal{A}^\lambda_\varepsilon$ has a discrete spectrum, and therefore the $n$-approximations of $\mathcal{A}^\lambda_\varepsilon$ and $\mathcal{M}^\lambda_\varepsilon$ may be defined analogously to \eqref{Andef} via the projection operators
\begin{equation*}
\begin{aligned}
\oper{G}^\lambda_{\varepsilon,n}&:\mathfrak{H}\to\mathfrak{H}\\
e_{\varepsilon,k}^\lambda&\mapsto\begin{cases}
e_{\varepsilon,k}^\lambda&\text{ if }k\le n,\\
0&\text{ if }k>n,
\end{cases}
\end{aligned}
\end{equation*}
(where $\{e_{\varepsilon,k}^\lambda\}_{k\in\mathbb{N}}$ are normalised eigenfunctions of $\mathcal{A}^\lambda_\varepsilon$) as
\begin{equation*}\label{Anepsilondef}
\oper{A}^\lambda_{\varepsilon,n}=\oper{G}^\lambda_{\varepsilon,n}\oper{A}^\lambda_\varepsilon\oper{G}^\lambda_{\varepsilon,n}\quad\text{and}\quad\oper{M}^\lambda_{\varepsilon,n}=\oper{G}^\lambda_{\varepsilon,n}\oper{M}^\lambda_\varepsilon\oper{G}^\lambda_{\varepsilon,n}.
\end{equation*}
We know by \autoref{src of truncation} that the family $\{\oper{A}_{\varepsilon,n}^\lambda\}_{\lambda\in[0,1],n\in\overline{\mathbb{N}}}$ is continuous in the strong resolvent sense. In addition, we have:
\begin{lemma}\label{src in epsilon}
The family $\{\oper{A}_\varepsilon^\lambda\}_{\lambda\in[0,1],\varepsilon\in[0,\infty)}$ is continuous in the strong resolvent sense.
\end{lemma}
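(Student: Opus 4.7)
My strategy is to fix a limit point $(\sigma, \delta) \in [0,1] \times [0,\infty)$ and a sequence $(\lambda_k, \varepsilon_k) \to (\sigma, \delta)$, and show $(\oper{A}^{\lambda_k}_{\varepsilon_k} + i)^{-1} \tos (\oper{A}^\sigma_\delta + i)^{-1}$ as $k \to \infty$. I would split the argument into two qualitatively different regimes depending on whether $\delta > 0$ or $\delta = 0$.

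\textbf{Case $\delta > 0$.} Since $\sform{A}^\lambda_\varepsilon[u,v] = \sform{A}^\lambda[u,v] + \varepsilon\,\sform{W}^\lambda[u,v]$ is jointly holomorphic in $(\lambda,\varepsilon) \in D_0 \times \mathbb{C}$ on the $(\lambda,\varepsilon)$-independent domain $\dom{\sform{A}} \cap \dom{\sform{W}}$, and remains sectorial on a complex neighbourhood of $(\sigma,\delta)$ on which $\mathrm{Re}(\varepsilon)$ stays strictly positive, the same argument as in the preceding lemma promotes $\{\oper{A}_\varepsilon^\lambda\}$ to a two-parameter holomorphic family of type (B) near $(\sigma,\delta)$. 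This actually yields \emph{norm} resolvent continuity, which is stronger than what is required.

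\textbf{Case $\delta = 0$.} Here the holomorphic machinery fails because $\dom{\sform{A}_\varepsilon^\lambda}$ for $\varepsilon > 0$ is strictly smaller than $\dom{\sform{A}}$, so the limiting form has a strictly larger domain. I would instead argue directly with forms. Fix $u \in \mathfrak{H}$, set $w_k = (\oper{A}_{\varepsilon_k}^{\lambda_k} + i)^{-1} u$ and $w = (\oper{A}^\sigma + i)^{-1} u$, and extract uniform bounds from the identity
\[
\sform{A}_{\varepsilon_k}^{\lambda_k}[w_k] + i\|w_k\|^2 = \ip{u}{w_k}.
\]
Taking real and imaginary parts yields $\|w_k\| \leq \|u\|$, $\sform{A}^{\lambda_k}[w_k] \leq \|u\|^2$, and $\varepsilon_k\,\sform{W}^{\lambda_k}[w_k] \leq \|u\|^2$. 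Because $\sform{A}^\lambda$ is a holomorphic family of type (a) on a neighbourhood of $[0,1]$, the graph norms $\norm{\cdot}_{\sform{A}^\lambda}$ are uniformly equivalent on the compact interval $[0,1]$, so $w_k$ is bounded in the Hilbert space $(\dom{\sform{A}}, \norm{\cdot}_{\sform{A}^\sigma})$. After passing to a weakly convergent subsequence $w_{k_j} \rightharpoonup w'$ in this space (hence also weakly in $\mathfrak{H}$), I would identify $w' = w$ by testing
\[
\sform{A}^{\lambda_k}[w_k, \phi] + \varepsilon_k\,\sform{W}^{\lambda_k}[w_k, \phi] + i\ip{w_k}{\phi} = \ip{u}{\phi}
\]
against $\phi \in \dom{\sform{A}} \cap \dom{\sform{W}}$ and letting $k \to \infty$. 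The first and last terms on the left converge by holomorphic continuity of $\sform{A}^\lambda$ in $\lambda$ combined with weak convergence of $w_k$, while Cauchy--Schwarz gives
\[
|\varepsilon_k\,\sform{W}^{\lambda_k}[w_k, \phi]|^2 \leq \varepsilon_k \cdot \bigl(\varepsilon_k\,\sform{W}^{\lambda_k}[w_k]\bigr) \cdot \sform{W}^{\lambda_k}[\phi] \to 0,
\]
the last factor being bounded by holomorphic continuity of $\sform{W}^\lambda$. Strong convergence $w_{k_j} \to w'$ would then follow by computing norms via the imaginary part: $\|w_k\|^2 = \mathrm{Im}\,\ip{u}{w_k} \to \mathrm{Im}\,\ip{u}{w'} = \|w'\|^2$. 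Uniqueness of the limit upgrades this to convergence of the full sequence $w_k \to w$.

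The main obstacle will be promoting the limiting form identity from test functions $\phi \in \dom{\sform{A}} \cap \dom{\sform{W}}$ to all $\phi \in \dom{\sform{A}}$, which is what is needed to conclude $w' \in \dom{\oper{A}^\sigma}$ with $(\oper{A}^\sigma + i) w' = u$ and hence $w' = w$. Assumption (v) only provides density in $\mathfrak{H}$, so I would need to upgrade this to density in the form norm $\norm{\cdot}_{\sform{A}^\sigma}$, presumably via the compact embedding in assumption (v) together with the uniform lower bound $\oper{A}^\lambda \geq 1 + \alpha$.
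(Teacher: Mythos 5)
Your proposal is correct in its essentials but takes a genuinely different route from the paper. The paper reduces strong resolvent continuity to \emph{weak} resolvent continuity (these are equivalent for self-adjoint operators), establishes the uniform bound $\sup_{\varepsilon\in[0,1],\lambda\in U}\|(\oper{A}^\lambda_\varepsilon+1)^{-1}\|\le 1$ on a complex neighbourhood $U\supset[0,1]$, defines the scalar holomorphic functions $f_n(\lambda)=\ip{(\oper{A}^\lambda_{\varepsilon_n}+1)^{-1}u-(\oper{A}^\lambda_{\varepsilon_\infty}+1)^{-1}u}{v}$ on $U$, and invokes (a corollary of) Montel's theorem to upgrade pointwise convergence $f_n\to0$ to uniform convergence on $[0,1]$; the pointwise convergence itself is obtained by citing Kato's abstract theorem on monotone/sectorial form convergence (VIII.\S3.2-Theorem 3.6), applied to the form differences $\varepsilon_n\sform{w}^\lambda\to 0$. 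Your argument instead works entirely on the real slice: you extract a priori bounds from the form identity, pass to a weakly convergent subsequence in the form-norm Hilbert space $(\dom{\sform{A}},\|\cdot\|_{\sform{A}^\sigma})$, identify the limit by testing against $\phi\in\dom{\sform{A}}\cap\dom{\sform{W}}$ and killing the $\varepsilon_k\sform{W}$ term with Cauchy--Schwarz, then upgrade weak to strong convergence in $\mathfrak{H}$ via the norm identity $\|w_k\|^2=\operatorname{Im}\ip{u}{w_k}$. In effect, you re-derive the content of Kato's form-convergence theorem in this special case rather than quoting it; the paper's route is slicker and uses the complex-analytic structure to get joint continuity in one stroke, while yours is more self-contained and elementary. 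Your handling of the case $\delta>0$ via a two-parameter holomorphic family is also consistent with the paper, which dismisses that case as ``straightforward'' (though note that on a complex neighbourhood the operators are only sectorial rather than self-adjoint, so evaluating the resolvent at $-1$ rather than at $\pm i$, as the paper does, is the safer choice).

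On the ``gap'' you flagged at the end: you are right that one needs $\dom{\sform{A}}\cap\dom{\sform{W}}$ to be a form core for $\sform{a}^\lambda$ (equivalently, dense in the $\|\cdot\|_{\sform{A}^\sigma}$-norm), and assumption (v) only states density in $\mathfrak{H}$. However, this is \emph{exactly} the hypothesis the paper's own proof invokes when it asserts, without derivation, that ``the forms have common form domain $\dom{\sform{a}}\cap\dom{\sform{w}}$, which is a form core for $\sform{a}^\lambda$,'' in order to apply Kato VIII.3.6. So this is not a defect specific to your approach; it is a tacit strengthening of (v) shared by the paper. Your suggested remedy---deducing the core property from the compact embedding in (v) plus the lower bound $\oper{A}^\lambda\ge1+\alpha$---does not obviously work: compactness of the embedding into $\mathfrak{H}$ says nothing about density of the intersection in the ambient form norm. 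The honest fix is simply to read (v) as asserting that $\dom{\sform{A}}\cap\dom{\sform{W}^\lambda}$ is a form core for $\sform{a}^\lambda$ (which holds in the applications, where both domains contain $C_c^\infty$); with that reading, both your proof and the paper's go through.

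Two minor points worth tightening in your write-up. First, when you pass $\sform{A}^{\lambda_k}[w_k,\phi]\to\sform{A}^\sigma[w',\phi]$ you need both (a) the relative form bound $|\sform{A}^{\lambda_k}[w,\phi]-\sform{A}^\sigma[w,\phi]|\le c_k\|w\|_{\sform{A}^\sigma}\|\phi\|_{\sform{A}^\sigma}$ with $c_k\to0$, which follows from the type (a) structure on a compact $\lambda$-interval, and (b) that $\sform{A}^\sigma[\cdot,\phi]$ is a continuous functional on the form Hilbert space; you use both implicitly, but they should be stated. Second, the promotion from subsequential to full-sequence convergence is fine (every subsequence has a sub-subsequence converging to the \emph{same} limit $w=(\oper{A}^\sigma+i)^{-1}u$, which is unambiguous), but deserves a word since you only extracted a subsequence.
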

\begin{proof}
By the equivalence of strong and weak convergence of the resolvent for self-adjoint operators  \cite[VIII, Problem 20(a)]{Reed1981} it is sufficient to prove that $(\oper{A}^\lambda_\varepsilon+1)^{-1}$ is weakly continuous jointly in $\lambda$ and $\varepsilon$. Without loss of generality we restrict to $\varepsilon\in[0,1]$ the general case being no harder. Let $U\subseteq D$ be an open set containing the interval $[0,1]$ such that for $\lambda\in U$, $\operatorname{Re}\sform{a}^\lambda\ge 1$ and $\operatorname{Re}\sform{w}^\lambda\ge-1$. Then, for $\lambda\in U$ and $\varepsilon\in[0,1]$ the forms $\sform{a}^\lambda_\varepsilon$ are closed and sectorial, with $\operatorname{Re}\sform{a}^\lambda_\varepsilon\ge0$. Hence the associated operators have the resolvent bound $\norm{(\oper{A}^\lambda_\varepsilon+\zeta)^{-1}}_{\bounded{\spac{H}}}\le 1/\operatorname{Re}\zeta$ for $\operatorname{Re}\zeta>0$. In particular,
 \begin{equation}\label{eq:resolvent-estimate-for-src-in-epsilon}
 \sup_{\varepsilon\in[0,1],\lambda\in U}\norm{(\oper{A}^\lambda_{\varepsilon}+1)^{-1}}_{\bounded{\spac{H}}}\le1.
 \end{equation}
 Now fix $u,v\in \spac{H}$, let $\varepsilon_n\to\varepsilon_\infty\in[0,\infty)$ and define the sequence of holomorphic functions $f_n:U\to\mathbb{C}$ by
\begin{equation*}
f_n(\lambda)=\ip{(\oper{A}^\lambda_{\varepsilon_n}+1)^{-1}u-(\oper{A}^\lambda_{\varepsilon_\infty}+1)^{-1}u}{v}
\end{equation*}
with $f_\infty=0$. To prove the joint weak continuity of the resolvent it is clearly sufficient to show that $f_n\to 0$ uniformly over $\lambda\in[0,1]$. The case $\varepsilon_\infty>0$ is straightforward so we assume that $\varepsilon_\infty=0$. Without loss of generality we may assume that $\varepsilon_n\ne0$ for all $n$. We will use a simple corollary of Montel's theorem (see e.g. \cite[Theorem 14.6]{Rudin1987}) that states that a sequence of holomorphic functions that is uniformly bounded on an open set $U\subseteq\mathbb{C}$ and converges pointwise in $U$ converges uniformly on any compact set $K\subset U$. The uniform boundedness of $f_n$ follows from \eqref{eq:resolvent-estimate-for-src-in-epsilon} above. Thus it suffices to show that $f_n\to0$ pointwise. To this end we will establish pointwise convergence of the corresponding forms $\sform{A}^\lambda_{\varepsilon_n}$. Indeed,
\begin{equation*}
\forall \lambda\in D, w\in\dom{\sform{a}^\lambda_{\varepsilon_n}},\quad\sform{a}^\lambda_{\varepsilon_n}[w]-\sform{a}^\lambda[w]=\varepsilon_n\sform{w}^\lambda[w]\to 0\text{\quad as }n\to\infty.
\end{equation*}
For $n\in\mathbb{N}$ the forms have common form domain $\dom{\sform{a}}\cap\dom{\sform{w}}$, which is a form core for $\sform{a}^\lambda$, and the sequence of form differences $\sform{a}^\lambda_{\varepsilon_n}-\sform{a}^\lambda$ is uniformly sectorial. Thus  due to \cite[VIII.\S3.2-Theorem 3.6]{Kato1995} $\oper{A}^\lambda_{\varepsilon_n}\tosr \oper{A}^\lambda$ as $n\to\infty$, which implies the pointwise convergence $f_n\to 0$ and completes the proof.
\end{proof}

\begin{corollary}\label{cor:mlambda}
The family $\{\oper{M}_\varepsilon^\lambda\}_{\lambda\in[0,1],\varepsilon\in[0,\infty)}$ is continuous in the strong resolvent sense.
\end{corollary}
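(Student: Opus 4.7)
The plan is to combine the two ingredients already established: Lemma \ref{src in epsilon} states that $\oper{A}^\lambda_\varepsilon$ varies jointly strong-resolvent continuously in $(\lambda,\varepsilon)\in[0,1]\times[0,\infty)$, and assumption (iii) states that $\oper{K}^\lambda$ is strongly continuous in $\lambda$ (independent of $\varepsilon$). Applying Lemma \ref{lem:appendix1} along an arbitrary sequence $(\lambda_n,\varepsilon_n)\to(\lambda_\infty,\varepsilon_\infty)$ delivers the desired joint strong resolvent continuity of $\oper{M}^\lambda_\varepsilon = \oper{A}^\lambda_\varepsilon+\oper{K}^\lambda$ for free.

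More concretely, I would fix an arbitrary convergent sequence $(\lambda_n,\varepsilon_n)\to(\lambda_\infty,\varepsilon_\infty)$ in $[0,1]\times[0,\infty)$. By Lemma \ref{src in epsilon} the sequence of self-adjoint operators $\oper{T}_n:=\oper{A}^{\lambda_n}_{\varepsilon_n}$ converges in the strong resolvent sense to $\oper{T}_\infty:=\oper{A}^{\lambda_\infty}_{\varepsilon_\infty}$. By assumption (iii), $\oper{K}_n:=\oper{K}^{\lambda_n}$ is a sequence of bounded symmetric operators converging strongly to $\oper{K}_\infty:=\oper{K}^{\lambda_\infty}$. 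Lemma \ref{lem:appendix1} then immediately yields
\[
\oper{M}^{\lambda_n}_{\varepsilon_n} = \oper{T}_n+\oper{K}_n \;\tosr\; \oper{T}_\infty+\oper{K}_\infty = \oper{M}^{\lambda_\infty}_{\varepsilon_\infty},
\]
and since the sequence was arbitrary, this is the claimed joint strong resolvent continuity.

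There is essentially no obstacle here: the two lemmas were precisely designed to be composed in this way. The only minor subtlety worth mentioning is that Lemma \ref{lem:appendix1} is phrased sequentially, so one should note that $[0,1]\times[0,\infty)$ is metrizable and hence sequential continuity coincides with continuity. No further estimates or case distinctions are required.
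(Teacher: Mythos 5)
Your proof is correct and follows exactly the same route as the paper: the paper's one-line proof invokes ``the stability of strong resolvent continuity with respect to bounded strongly continuous perturbations,'' which is precisely \autoref{lem:appendix1} applied to the strong resolvent continuity established in \autoref{src in epsilon}. Your version simply spells out the sequential argument that the paper leaves implicit.
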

\begin{proof}
This follows from the stability of strong resolvent continuity with respect to bounded strongly continuous perturbations.
\end{proof}

\section{Proof of Theorem \ref{thm:main2}}\label{sec:proof}
We split the proof into first proving upper  and lower semi-continuity of $\Sigma(\cdot,\cdot)$ and of $\Sigma_\varepsilon(\cdot,n)$. Informally, we recall that \emph{upper}-semicontinuity of spectra means that the spectrum cannot expand when perturbed, while \emph{lower}-semicontinuity means that the spectrum cannot shrink when perturbed. Then, the uniform convergence in $\lambda\in[0,1]$ of  $\Sigma_\varepsilon(\lambda,n)\to\Sigma(\lambda,\varepsilon)$ as $n\to\infty$ is addressed.

\begin{proof}[Proof of \autoref{thm:main2}.]
\emph{1) Lower semi-continuity.} The lower semi-continuity of spectra under strong resolvent convergence of self-adjoint operators is standard (e.g. \cite[VIII.\S1.2-Theorem 1.14.]{Kato1995}). As  $\{\oper{M}^{\lambda}_{\varepsilon}\}_{\lambda\in[0,1],\varepsilon\in[0,\infty)}$ is continuous in the strong resolvent sense (\autoref{cor:mlambda}) we have that $\Sigma$ is lower semi-continuous.

Now let us consider  $\Sigma_\varepsilon$. For fixed $n$, $\Sigma_\varepsilon(\cdot,n)$ is associated to a finite dimensional operator and hence is clearly lower semi-continuous (and, in fact, continuous). However, let us also consider what happens as $n$ varies. This requires some caution  due to the spurious eigenvalue of $\oper{M}^{\lambda}_{\varepsilon,n}$ at $0$ for $n<\infty$ (see \autoref{rek:thm} for further discussion). We instead consider the operator $\widehat{\oper{M}}^\lambda_{\varepsilon,n}:=\oper{M}^\lambda_{\varepsilon,n}+M(1-\oper{G}^\lambda_{\varepsilon,n}):\mathfrak{H}\to\mathfrak{H}$ where $M>1$ is arbitrary (note that $\widehat{\oper{M}}^\lambda_{\varepsilon,\infty}=\oper{M}^\lambda_{\varepsilon,\infty}$). This moves the spurious eigenvalue to $M\not\in(-\infty,1]$. By \autoref{src of truncation}, along any sequence $\lambda_n\to\lambda\in[0,1]$ as $n\to\infty$ we have $\oper{M}^{\lambda_n}_{\varepsilon,n}\tosr \oper{M}_{\varepsilon}^\lambda$ as $n\to\infty$. Thanks to the stability of strong resolvent convergence with respect to strongly continuous bounded perturbations we also have $\widehat{\oper{M}}^{\lambda_n}_{\varepsilon,n}\tosr \oper{M}^\lambda_\varepsilon$. Moreover, the spectra of  $\widehat{\oper{M}}^\lambda_{\varepsilon,n}$ and $\widetilde{\oper{M}}^\lambda_{\varepsilon,n}$ agree in $(-\infty,1]$ as $M>1$. We have therefore established that given any $\delta>0$ there exists $N>0$ such that for all $n>N$ any point in $\Sigma(\lambda,\varepsilon)$ is within $\delta$ of a point in $\Sigma_\varepsilon(\lambda_n,n)$.

\emph{2) Upper semi-continuity} follows from  \autoref{prop:compactness1} below. Moreover, it follows from  \autoref{prop:compactness1} that given any $\delta>0$ there exists $N>0$ such that for all $n>N$ any point in $\Sigma_\varepsilon(\lambda_n,n)$  is within $\delta$ of a point in $\Sigma(\lambda,\varepsilon)$.

\emph{3)} Note that from (1) and (2) it follows  that $\Sigma_\varepsilon(\lambda_n,n)\to \Sigma(\lambda,\varepsilon)$ for any sequence $\lambda_n$ that converges to $\lambda$ as $n\to\infty$.

\emph{4) The uniform convergence  of $\Sigma_{\varepsilon}(\cdot,n)\to \Sigma(\cdot,\varepsilon)$ as $n\to\infty$} follows from (3) combined with the fact that $[0,1]$ is compact. Indeed, by contradiction,  if uniform convergence didn't hold, then there would exist a $\delta>0$ such that for every $N$ there would exist $n>N$ such that $d_H(\Sigma_{\varepsilon}(\lambda_n,n),\Sigma(\lambda_n,\varepsilon))>\delta$ for some $\lambda_n\in[0,1]$. By compactness there exists a subsequence (we abuse notation and keep the index $n$) along which $\lambda_n\to\lambda_\infty\in[0,1]$. As $\Sigma(\cdot,\varepsilon)$ is continuous, for all sufficiently large $n$ we must have $d_H(\Sigma(\lambda_n,\varepsilon),\Sigma(\lambda_\infty,\varepsilon))<\delta/2$. Therefore, it must also hold that $d_H(\Sigma_{\varepsilon}(\lambda_n,n),\Sigma(\lambda_\infty,\varepsilon))>\delta/2$ for infinitely many $n$'s. However, this is a contradiction to (3).
\end{proof}
The missing ingredient in the above proof is:
\begin{proposition}\label{prop:compactness1}
Let $\sigma_n\to \sigma$ as $n\to\infty$ with $\sigma_n,\sigma\in(-\infty,1]$ and $\lambda_n\to\lambda$ as $n\to\infty$ with $\lambda_n,\lambda\in[0,1]$. Then the following hold.
\begin{enumerate}
\item Let $\varepsilon_n\to\varepsilon\ge0$ as $n\to\infty$, and $\{u_n\}_{n=1}^\infty$ be a sequence with $\norm{u_n}=1$, $u_n\in\dom{\oper{M}^{\lambda}_{\varepsilon_n}}$ and $\oper{M}^{\lambda_n}_{\varepsilon_n}u_n=\sigma_nu_n$. Then $\{u_n\}_{n=1}^\infty$ has a subsequence strongly converging to some $u\ne0$, which satisfies $\oper{M}^\lambda_\varepsilon u=\sigma u$.
\item Let $\varepsilon>0$ be fixed, and $\{u_n\}_{n=1}^\infty$ be a sequence with $\norm{u_n}=1$, $\oper{G}^{\lambda_n}_{\varepsilon,n}u_n=u_n$ and $\oper{M}^{\lambda_n}_{\varepsilon,n}u_n=\sigma_nu_n$. Then  $\{u_n\}_{n=1}^\infty$ has a subsequence strongly converging to some $u\ne0$, which satisfies $\oper{M}^\lambda_\varepsilon u=\sigma u$.
\end{enumerate} 
\end{proposition}
\begin{proof}
As the proof of the first claim is slightly simpler and otherwise the same, we only give the proof for the second claim, leaving the first to the reader. Each $u_n$ solves the equation
\begin{equation*}
\oper{G}^{\lambda_n}_{\varepsilon,n}\oper{A}^{\lambda_n}_{\varepsilon}\oper{G}^{\lambda_n}_{\varepsilon,n}u_n-\sigma_n u_n+\oper{G}^{\lambda_n}_{\varepsilon,n}\oper{K}^{\lambda_n}\oper{G}^{\lambda_n}_{\varepsilon,n}u_n=0.
\end{equation*}
The requirement that $u_n=\oper{G}^{\lambda_n}_{\varepsilon,n}u_n$ and the fact that $\oper{G}^{\lambda_n}_{\varepsilon,n}$ commutes with $\oper{A}^{\lambda_n}_{\varepsilon}$ means that this is equivalent to
\begin{equation}\label{eq:eigenfunction equation for u_k}
{\oper{A}^{\lambda_n}_{\varepsilon}u_n=\sigma_n u_n-\oper{G}^{\lambda_n}_{\varepsilon,n}\oper{K}^{\lambda_n}u_n.}
\end{equation}
Taking the inner product with $u_n$ we estimate,
\begin{equation} \label{eq:form estimate for eigenfunctions1}
\sform{A}^0[u_n]\le C\sform{A}^{\lambda_n}[u_n]\le C\sform{A}^{\lambda_n}_{\varepsilon_n}[u_n]\le C\sigma_n\norm{u_n}^2+C\sup_{\lambda\in[0,1]}\norm{\oper{K}^\lambda}_{\bounded{\mathfrak{H}}}\norm{u_n}^2\le C'
\end{equation}
where $C$ is independent of $n$ {and} comes from the relative form boundedness of the holomorphic family $\{\oper{A}^\lambda\}_{\lambda\in D}$ (see \cite[VII-\S4.2]{Kato1995}) and the supremum is finite by the uniform boundedness principle as $\{\oper{K}^\lambda\}_{\lambda\in[0,1]}$ is strongly continuous. Hence for all $n$ we have $\norm{ |\oper{A}^0|^{1/2}u_n}^2\le C'$, where $|\oper{A}^0|^{1/2}$ is the square root of the positive self-adjoint operator $\oper{A}^0$. By assumption, $\oper{P}$ is relatively compact with respect to $\oper{A}^0$, and hence also to $|\oper{A}^0|^{1/2}$.
{Indeed, the inverse of $|\oper{A}^0|^{1/2}$ can be expressed using the functional calculus (see \cite[V-\S3.11-Equation 3.43]{Kato1995}) of the self-adjoint operator $\oper{A}^0$ as
\begin{equation*}
|\oper{A}^0|^{-1/2}=\frac1\pi\int^\infty_0\zeta^{-1/2}(\oper{A}^0+\zeta)^{-1}\,d\zeta
\end{equation*}
where the integral is absolutely convergent in operator norm due to the bound $\norm{(\oper{A}^0+\zeta)^{-1}}_{\bounded{\spac{H}}}\le (1+\zeta)^{-1}$ for $\zeta\ge0$. By composing both sides of this equation on the left with $\oper{P}$ and moving $\oper{P}$ inside the integral (which is possible as $\oper{P}$ is bounded and the integral converges absolutely in norm) we deduce that $\oper{P}|\oper{A}^0|^{-1/2}$ is given by an absolutely norm convergent integral of compact operators, and is hence compact.}

Thus we may pass to a subsequence (though we retain the subscript $n$) for which
\begin{equation*}
 \oper{P}u_n\to v\in\mathfrak{H}.
\end{equation*}
Then by rewriting \eqref{eq:eigenfunction equation for u_k} and using $\oper{K}^\lambda=\oper{K}^\lambda\oper{P}$ for all $\lambda\in[0,1]$ we have
\begin{equation}\label{eq:rewritten eigenfunction equation for u_k}
 u_n=-(\oper{A}^{\lambda_n}_{\varepsilon}-\sigma_n)^{-1}\oper{G}^{\lambda_n}_{\varepsilon,n}\oper{K}^{\lambda_n}\oper{P}u_n
\end{equation}
where the resolvent exists by the assumption that $\oper{A}^\lambda\ge1+\alpha$ for all $\lambda\in[0,1]$. As remarked before $\oper{G}^{\lambda}_{\varepsilon,n}\tos 1$ uniformly in $\lambda\in[0,1]$ so that $\oper{G}^{\lambda_n}_{\varepsilon,n}\tos1$ as $n\to\infty$. Therefore by the composition of strong convergences
\begin{equation*}
u_n\to -(\oper{A}^{\lambda}_{\varepsilon}-\sigma)^{-1}\oper{K}^\lambda v:=u
\end{equation*}
as $n\to\infty$. Then as $u_n$ is strongly convergent, necessarily $v=\oper{P}u$ and the assertion of the proposition follows.
\end{proof}

\section{Non-positive operators: proof of Theorem \ref{thm:main}}\label{sec:nonpositive}
We define the $\varepsilon$-approximations of $\oper{A}^\lambda_\pm$ as before in terms of a pair of holomorphic families $\oper{W}^\lambda_\pm$ with the same assumptions. The eigenprojections of $\oper{A}^\lambda_{\varepsilon}$ are then denoted by $\oper{G}^\lambda_{\pm,\varepsilon,n}$ and we define
\begin{equation*}
 \oper{G}^\lambda_{\varepsilon,n}=\begin{bmatrix}
                     \oper{G}^\lambda_{+,\varepsilon,n}&0\\
		      0&\oper{G}^\lambda_{-,\varepsilon,n}
                    \end{bmatrix}
\end{equation*}
and
\begin{equation*}
\begin{aligned}
 \oper{A}^\lambda_{\varepsilon,n}&=\oper{G}^\lambda_{\varepsilon,n}\oper{A}^\lambda_{\varepsilon}\oper{G}^\lambda_{\varepsilon,n}\\
 \oper{M}^\lambda_{\varepsilon,n}&=\oper{G}^\lambda_{\varepsilon,n}\oper{M}^\lambda_{\varepsilon}\oper{G}^\lambda_{\varepsilon,n}.
\end{aligned}
\end{equation*}
All the preceding proofs of continuity can be adapted to this case. Indeed, \autoref{lem:cont-alambda} holds without modification, while \autoref{src of truncation} and \autoref{src in epsilon} can be extended by using the identity
\begin{equation*}
\left(\begin{bmatrix}
\oper{T}_+&0\\
0&\oper{T}_- 
\end{bmatrix}
+i\right)^{-1}=
\begin{bmatrix}
(\oper{T}_++i)^{-1}&0\\
0&(\oper{T}_-+i)^{-1}
\end{bmatrix}
\end{equation*}
and the stability of norm (resp. strong) continuity to symmetric bounded norm (reps. strongly) continuous perturbations. With these continuity results, the proof of lower semi-continuity of $\Sigma$ and $\Sigma_\varepsilon$ can be easily adapted. The compactness result \autoref{prop:compactness1} that establishes the upper semi-continuity needs a little more modification. Recall that the discrete region of the spectrum is the gap $(-\alpha-1,1+\alpha)$ rather than the half-line $(-\infty,1+\alpha)$. We restate the compactness result below.
\begin{proposition}\label{prop:compactness12}
Let $\sigma_n\to \sigma$ as $n\to\infty$ with $\sigma_n,\sigma\in[-1,1]$ and $\lambda_n\to\lambda$ as $n\to\infty$ with $\lambda_n,\lambda\in[0,1]$. Then the following hold.
\begin{enumerate}
\item Let $\varepsilon_n\to\varepsilon\ge0$ as $n\to\infty$, and $\{u_n\}_{n=1}^\infty$ be a sequence with $\norm{u_n}=1$, $u_n\in\dom{\oper{M}^{\lambda}_{\varepsilon_n}}$ and $\oper{M}^{\lambda_n}_{\varepsilon_n}u_n=\sigma_nu_n$. Then $\{u_n\}_{n=1}^\infty$ has a subsequence strongly converging to some $u\ne0$, which satisfies $\oper{M}^\lambda_\varepsilon u=\sigma u$.
\item Let $\varepsilon>0$ be fixed, and $\{u_n\}_{n=1}^\infty$ be a sequence with $\norm{u_n}=1$, $\oper{G}^{\lambda_n}_{\varepsilon}u_n=u_n$ and $\oper{M}^{\lambda_n}_{\varepsilon,n}u_n=\sigma_nu_n$. Then  $\{u_n\}_{n=1}^\infty$ has a subsequence strongly converging to some $u\ne0$, which satisfies $\oper{M}^\lambda_\varepsilon u=\sigma u$.
\end{enumerate} 
\end{proposition}
\begin{proof}[Proof (sketched)]
We need only change \eqref{eq:form estimate for eigenfunctions1} to the two estimates
\begin{align*}
\sform{A}^0_\pm[u^\pm_k]&\le C_\pm\sform{A}_\pm^{\lambda_k}[u^\pm_k]\le C_\pm\sform{A}^{\lambda_k}_{\pm,\varepsilon_k}[u^\pm_k]\\
&\le C_\pm|\sigma_k|\norm{u^\pm_k}^2+C_\pm\sup_{\lambda\in[0,1]}\norm{\oper{K}^\lambda}_{\bounded{\mathfrak{H}}}\norm{u_k}^2\le C'
\end{align*}
obtained by taking the inner product of \eqref{eq:eigenfunction equation for u_k} with $u^\pm_k$ where $u_k=(u^+_k,u^-_k)\in\mathfrak{H}_+\times\mathfrak{H}_-$, from which the relative compactness of $\oper{P}u_k$ follows as before, and lastly note that $\oper{A}^\lambda_\pm\ge1+\alpha$ implies that the resolvent $(\oper{A}^{\lambda_k}_{\varepsilon_k}-\sigma_k)^{-1}$ exists in \eqref{eq:rewritten eigenfunction equation for u_k}.
\end{proof} 
This proves \autoref{thm:main}.

\section{An application: plasma instabilities}\label{sec:application}
The discussion in this section is informal. As stability analysis typically relies on a detailed understanding of the spectrum of the linearised problem, most results in this direction require delicate spectral analysis. However, an outstanding open problem has been stability analysis of plasmas that do not possess special symmetries (such as periodicity or monotonicity\footnote{\emph{Monotonicity}, roughly speaking, means that there are fewer particles at higher energies. For a precise definition see e.g. \cite{Ben-Artzi2011b}.}) due to the more complicated structure of the spectrum. A significant obstacle has been the existence of an essential spectrum extending to both $\pm\infty$. Let us briefly outline the problem, which is treated in detail in  \cite{Ben-Artzi2013a}.

Plasmas are typically modelled by the relativistic Vlasov-Maxwell system: Letting $f=f(t,x,v)$ be a probability density function measuring the density of electrons that at time $t\geq0$ are located at the point $x\in\mathbb{R}^d$, have momentum $v\in\mathbb{R}^d$ and velocity $\hat{v}=v/\sqrt{1+|v|^2}$, the (relativistic) Vlasov equation
	\begin{equation}\label{eq:vlasov}
	\frac{\partial f}{\partial t}+\hat{v}\cdot\nabla_x f+\mathbf{F}\cdot\nabla_v f=0
	\end{equation} 
is a transport equation describing their evolution due to the Lorentz force $\mathbf{F}=-\mathbf{E}-\hat{v}\times\mathbf{B}$. Here we have taken the mass of the electrons and the speed of light to be $1$ for simplicity. The fields $\mathbf{E}=\mathbf{E}(t,x)$ and $\mathbf{B}=\mathbf{B}(t,x)$ are the (self-consistent) electric and magnetic fields, respectively. They satisfy Maxwell's equations (written here for their respective potentials $\phi$ and $\mathbf{A}$, satisfying $\mathbf{E}=-\nabla\phi$ and $\mathbf{B}=\nabla\times\mathbf{A}$ in the Lorenz gauge $\partial_t\phi+\nabla\cdot\mathbf{A}=0$):
	\begin{equation}\label{eq:maxwell}
	\left\{\begin{aligned}
	(-\mathbf{\Delta}+\partial_t^2)\mathbf{A}+\mathbf{j}=\mathbf{0},\\
	(\Delta-\partial_t^2)\phi+\rho=0,
	\end{aligned}\right.
	\end{equation}
where $\rho=\rho(t,x)=-\int f\;dv$ is the charge density and ${\mathbf j}={\mathbf j}(t,x)=-\int \hat{v} f\;dv$ is the current density (negative signs are due to the electrons  charge). Linearising \eqref{eq:vlasov} we obtain
	\begin{equation}\label{eq:lin-vlasov}
	\frac{\partial f}{\partial t}+\hat{v}\cdot\nabla_x f+\mathbf{F^0}\cdot\nabla_v f=-\mathbf{F}\cdot\nabla_v f^0,
	\end{equation} 
where $f^0$ and $\mathbf{F^0}$ are the equilibrium density and force field, respectively, and $f$ and $\mathbf{F}$ are their first order perturbations. Maxwell's equations do not require linearisation as they are already linear. We seek solutions to \eqref{eq:maxwell}-\eqref{eq:lin-vlasov} that grow exponentially in time. Therefore, substituting into \eqref{eq:lin-vlasov} the ansatz that all time-dependent quantities behave like $e^{\lambda t}$ with $\lambda>0$, we get
	\begin{equation*}\label{eq:lin-vlasov-ansatz}
	\lambda f+\hat{v}\cdot\nabla_x f+\mathbf{F^0}\cdot\nabla_v f=-\mathbf{F}\cdot\nabla_v f^0.
	\end{equation*} 
An inversion of this equation leaves us with the integral expression
	\begin{equation}\label{eq:f}
	f=-(\lambda+(\hat{v},\mathbf{F^0})\cdot\nabla_{x,v})^{-1}(\mathbf{F}\cdot\nabla_v f^0)
	\end{equation}
which depends upon $\lambda$ as a parameter. By substituting the expression \eqref{eq:f} into Maxwell's equations \eqref{eq:maxwell}, $f$ is eliminated as an unknown, and the only unknowns left are  $\phi$ and $\mathbf{A}$. Note that an immediate benefit is that the problem now only involves the spatial variable $x$, and not the full phase-space variables $x,v$.

We are therefore left with the task of showing that Maxwell's equations are satisfied with the parameter $\lambda>0$.  Gauss' equation, for instance, becomes
	\begin{equation*}
	(\Delta-\lambda^2)\phi=-\rho=\int f\ dv=-\int(\lambda+(\hat{v},\mathbf{F^0})\cdot\nabla_{x,v})^{-1}(\mathbf{F}\cdot\nabla_v f^0)\ dv
	\end{equation*}
which is an equation of the form
	\begin{equation}\label{eq:gauss}
	(\Delta-\lambda^2)\phi+\mathcal{K}^\lambda_{--}\phi+\mathcal{K}^\lambda_{-+}\mathbf{A}=0,
	\end{equation}
{where, for instance, 
	\begin{align*}
	\mathcal{K}^\lambda_{--}\phi&=\int(\lambda+(\hat{v},\mathbf{F^0})\cdot\nabla_{x,v})^{-1}(\nabla\phi\cdot\nabla_v f^0)\ dv,\\
	\mathcal{K}^\lambda_{-+}\mathbf{A}&=\int(\lambda+(\hat{v},\mathbf{F^0})\cdot\nabla_{x,v})^{-1}((\hat{v}\times(\nabla\times\mathbf{A}))\cdot\nabla_v f^0)\ dv.
	\end{align*}}
The rest of Maxwell's equations can be written as
	\begin{equation}\label{eq:maxwell2}
	(-\mathbf{\Delta}+\lambda^2)\mathbf{A}+\mathcal{K}^\lambda_{+-}\phi+\mathcal{K}^\lambda_{++}\mathbf{A}=\mathbf{0}.
	\end{equation}
{(we omit the precise form of these operators here).
The system \eqref{eq:gauss}-\eqref{eq:maxwell2} for $\phi$ and $\mathbf{A}$ turns out to be self-adjoint and  is precisely of the form \eqref{eq:non-positive}. Exhibiting linear instability, i.e. the existence of a growing mode with rate $\lambda>0$, is equivalent to solving this system for  some $\lambda>0$. The operator in this system has the form
	\begin{equation*}
	\oper{M}^\lambda=\oper{A}^\lambda+\oper{K}^\lambda=
	\begin{bmatrix}
	-\mathbf{\Delta}+\lambda^2&0\\
	0&\Delta-\lambda^2
	\end{bmatrix}
	+
	\begin{bmatrix}
	\oper{K}^\lambda_{++}&\oper{K}^\lambda_{+-}\\
	\oper{K}^\lambda_{-+}&\oper{K}^\lambda_{--}
	\end{bmatrix},\quad \lambda>0.
	\end{equation*}

Hence now one would like to show that for some $\lambda>0$, the operator $\oper{M}^\lambda$ has a nontrivial kernel. As this operator is self-adjoint for all $\lambda>0$, its spectrum lies on the real line. We use this fact to ``track'' the spectrum as $\lambda$ varies from $0$ to $+\infty$ and find an eigenvalue that crosses through $0$. By adding to $\oper{A}^\lambda$ the operator
	\begin{equation*}
	\oper{W}=\begin{bmatrix} 1+x^2&0\\0&-1-x^2\end{bmatrix}
	\end{equation*}
and defining
	\begin{equation*}
	\oper{M}^\lambda_\varepsilon=\oper{A}^\lambda+\varepsilon\oper{W}+\oper{K}^\lambda,\quad\lambda>0,\,\varepsilon>0
	\end{equation*}
we obtain a family of operators with a compact resolvent. This family enjoys the properties that we studied in this paper. For instance, natural candidates for the projection operators $\oper{P}_\pm$ are multiplications by the indicator functions (in the appropriate spaces) onto the (compact) support of the steady-state around which we linearise.

Let us describe the method for finding a nontrivial kernel in a nutshell.  It is shown that there exist $0<\lambda_*<\lambda^*<\infty$ (independent of $n$ and $\varepsilon$) for which the corresponding approximate operators $\mathcal{M}^{\lambda_*}_{\varepsilon,n}$ and  $\mathcal{M}^{\lambda^*}_{\varepsilon,n}$ have a different number of negative (and positive) eigenvalues, and therefore due to the continuous dependence of the spectrum (as a set) on the parameter $\lambda$ there must exist $\lambda_*<\lambda_n<\lambda^*$ for which $\mathcal{M}^{\lambda_n}_{\varepsilon,n}$ has a nontrivial kernel. Since $\lambda_n$ is a bounded sequence, one can extract a convergent subsequence converging, say, to some $\lambda_\infty\in[\lambda_*,\lambda^*]$. \autoref{thm:main} is then invoked to show that one can also take the two limits $n\to\infty$ and $\varepsilon\to0$ to conclude that $\oper{M}^{\lambda_\infty}$ has a nontrivial kernel.
We refer to \cite{Ben-Artzi2013a} for full details.}

\bibliography{library}
\bibliographystyle{abbrv}
\end{document}